\newenvironment{enumerateroman}
{\begin{enumerate}\renewcommand{\labelenumi}%
{\textit{(\roman{enumi})}}}%
{\end{enumerate}}
\numberwithin{equation}{section}
\theoremstyle{plain}
\newtheorem{thm}[equation]{Theorem}
\newtheorem{prop}[equation]{Proposition}
\newtheorem{lem}[equation]{Lemma}
\newtheorem{obs}[equation]{Observation}
\theoremstyle{definition}
\newtheorem{defn}[equation]{Definition}
\newtheorem{rem}[equation]{Remark}
\newtheorem*{ack}{Acknowledgements}
\newcommand{\wrt}{w.\,r.\,t.~}
\newcommand{\mC}{{\mathbb C}}
\newcommand{\mN}{{\mathbb N}}
\newcommand{\mR}{{\mathbb R}}
\newcommand{\mZ}{{\mathbb Z}}
\newcommand{\mcA}{\mathcal{A}}
\newcommand{\mcB}{\mathcal{B}}
\newcommand{\mcC}{\mathcal{C}}
\newcommand{\mcD}{\mathcal{D}}
\newcommand{\mcE}{\mathcal{E}}
\newcommand{\mcK}{\mathcal{K}}
\newcommand{\mcM}{\mathcal{M}}
\newcommand{\mcP}{\mathcal{P}}
\newcommand{\mcQ}{\mathcal{Q}}
\newcommand{\mcS}{\mathcal{S}}
\newcommand{\mcU}{\mathcal{U}}
\newcommand{\fraka}{\mathfrak{a}}
\newcommand{\frakb}{\mathfrak{b}}
\newcommand{\frakc}{\mathfrak{c}}
\newcommand{\frakg}{\mathfrak{g}}
\newcommand{\frakH}{\mathfrak{H}}
\newcommand{\frakm}{\mathfrak{m}}
\newcommand{\frakn}{\mathfrak{n}}
\newcommand{\frakp}{\mathfrak{p}}
\newcommand{\fraks}{\mathfrak{s}}
\newcommand{\frakt}{\mathfrak{t}}
\newcommand{\frakz}{\mathfrak{z}}
\newcommand{\LG}{L^1(G)}
\newcommand{\LM}{L^1(M)}
\newcommand{\CG}{C^{\ast}(G)}
\newcommand{\CM}{C^{\ast}(M)}
\newcommand{\Ccinfty}[1]{\mcC^{\infty}_0(#1)}
\newcommand{\Cinfty}[1]{\mcC^{\infty}(#1)}
\newcommand{\twoheadlongrightarrow}{\,\,\rightarrow%
\mspace{-24.0mu}\longrightarrow\,}
\renewcommand{\to}{\longrightarrow}
\newcommand{\onto}{\mbox{$\twoheadlongrightarrow$}}
\newcommand{\mdot}{\!\cdot\!}
\newcommand{\clos}{^{\textbf{---}}}
\DeclareMathOperator{\ind}{ind}
\DeclareMathOperator{\Ad}{Ad}
\DeclareMathOperator{\coAd}{Ad^{\ast}}
\DeclareMathOperator{\ad}{ad}
\DeclareMathOperator{\Id}{Id}
\DeclareMathOperator{\Der}{Der}
\DeclareMathOperator{\Hom}{Hom}
\DeclareMathOperator{\Prim}{Prim}
\DeclareMathOperator{\sgn}{sgn}
\DeclareMathOperator{\supp}{supp}
\DeclareMathOperator{\vol}{vol}
\begin{document}

\title{Invariant differential operators and central Fourier
multipliers on exponential Lie groups}
\author{O.\ Ungermann \\[0.5cm] \small{(Universit\"at Bielefeld)}}
\date{}
\maketitle
\hspace{1.5cm}\parbox{115mm}{\small\noindent\textbf{Abstract.}
Let $G$ be an exponential solvable Lie group. By definition $G$ is
$\ast$-regular if $\ker_{\LG}\pi$ is dense in $\ker_{\CG}\pi$ for
all unitary representations $\pi$ of~$G$. Boidol characterized the
$\ast$-regular exponential Lie groups by a purely algebraic condition.
In this article we will focus on non-$\ast$-regular groups. We say
that $G$ is primitive $\ast$-regular if the above density condition
is satisfied for all irreducible representations. Our goal is to
develop appropriate tools to verify this weaker property. To this
end we will introduce Duflo pairs $(W,p)$ and central Fourier
multipliers $\psi$ on the stabilizer $M=G_fN$ of representations
$\pi=\mcK(f)$ in general position. Using Littlewood-Paley theory
we will derive some results on multiplier operators $T_\psi$ which
might be of independent interest. The scope of our method of
separating triples $(W,p,\psi)$ will be sketched by studying two
significant examples in detail. It should be noticed that the
methods 'separating triples' and 'restriction to subquotients'
suffice to prove that all exponential solvable Lie groups of
dimension $\le 7$ are primitive $\ast$-regular.}\\[1cm]

\hspace{1.5cm}\parbox{115mm}{ \textit{MSC(2000):} 17B35, 22D20,
22E27, 42B15; 43A22, 46F05}

\section{Introduction}\label{sec:introduction}
Let $G$ be an exponential solvable Lie group with Lie algebra
$\frakg$. Fix a co\-abelian, nilpotent ideal $\frakn$ of $\frakg$.
Let $f\in\frakg^\ast$ be in general position such that
$\frakm=\frakg_f+\frakn$ is a proper, non-nilpotent ideal.
Set $\tilde{f}=f\,|\,\frakm$ and $f'=f\,|\,\frakn$. Further
we consider the orbit $\tilde{X}=\coAd(G)\tilde{f}$
and the $\coAd(G)$-invariant subset
$\tilde{\Omega}=\{\tilde{h}\in\frakm^\ast: \tilde{h}
\,|\,\frakn\text{ is in the closure of }\coAd(G)f'
\text{ in }\frakn^\ast\}$ of $\frakm^\ast$.\\\\
Let $M$ denote the connected subgroup of $G$
with Lie algebra $\frakm$. We work with the
$C^\ast$-completion $\CM$ of the Banach
$\ast$-algebra $\LM$. As usual we provide $\Prim\CM$
with the Jacobson topology, and $\widehat{M}$ with
the initial topology \wrt the natural map
$\widehat{M}\onto\Prim\CM$. This map is a bijection
by the main result of \cite{Pog3}. Furthermore it is
known that the Kirillov map yields a $G$-equivariant
bijection from the coadjoint orbit space
$\frakm^\ast/\coAd(M)$ onto the unitary dual
$\widehat{M}$ of $M$. Hence $\tilde{X}$ corresponds
to a $G$-orbit $X$ in~$\widehat{M}$, and
$\tilde{\Omega}$ to a $G$-invariant subset
$\Omega$ of~$\widehat{M}$.\\\\
Our aim is to prove $G$ to be primitive $\ast$-regular.
According to the strategy developed in Section~5
of~\cite{Ung1} we have to verify
\begin{equation}\label{cFm_equ:no_inclusion}
\bigcap\limits_{\pi\in X}\ker_{\LM}\pi\not\subset
\ker_{\LM}\rho
\end{equation}
for all critical $\rho\in\Omega\setminus\overline{X}$.
Compare also Assertion~5.3 of~\cite{Ung1}. Note that
$\Omega\setminus\overline{X}$ consists of all
representations $\rho=\mcK(\tilde{g})$ where
$\tilde{g}\in\tilde{\Omega}$ is critical for
$\tilde{X}=\coAd(G)\tilde{f}$ in the sense of
Definition~5.2 of~\cite{Ung1}.\\\\
Relation (\ref{cFm_equ:no_inclusion}) deserves a further
explanation. If the primitive ideal spaces $\Prim C^\ast(M)$
and $\Prim_\ast\LM$ are endowed with the Jacobson topology,
then the natural map $\Psi:\Prim C^\ast(M)\onto\Prim_\ast\LM$,
$\Psi(P)=P'=P\cap\mcA$, is a continuous bijection. Continuity
means that $\Psi(\overline{A})\subset\overline{\Psi(A)}$
for all subsets $A$ of $\Prim\CM$. Note that injectivity,
which is necessary for $M$ to be primitive $\ast$-regular,
follows from~\cite{Pog3}. We shall assume that $M$ is not
$\ast$-regular, or equivalently, that $\Psi$ is not a homeomorphism.
In this situation one may ask whether the reverse inclusion
$\overline{\Psi(X)}\subset\Psi(\overline{X})$ is satisfied at
least for \textit{certain} subsets $X$ of $\widehat{M}=\Prim\CM$.
The question is: Does $\rho\not\in\overline{X}$ imply
Relation~(\ref{cFm_equ:no_inclusion}) for all orbits
$X$ of an exponential Lie group $G$ as above?\\\\
Producing functions $c\in\LM$ such that $\pi(c)=0$ for
all $\pi\in X$ and $\rho(c)\neq 0$ is a challenging
problem. Our approach to a solution is best explained
in the context of the subsequent theorem.

\begin{thm}[Dauns, Dixmier, Hofmann;1967]
\label{cFm_thm:Dauns_Hofmann}
Let $\mcB$ be $C^\ast$-algebra, $\mcB^b$ its adjoint
algebra (multiplier algebra), and $\mcC^b(\Prim\mcB)$
the $C^\ast$-algebra of all complex-valued, bounded,
continuous functions on its primitive ideal space.
\begin{enumerateroman}
\item If $\mu\in\mcC^b(\Prim\mcB)$ and $a\in\mcB$, then there
exists a unique element $c=\mu\ast a$ in $\mcB$ such that
$c+P=\mu(P)\mdot(a+P)$ holds in $\mcB/P$ for all $P\in\Prim\mcB$.\\
\item Further $\Gamma(\mu)a=\mu\ast a$ defines an isomorphism
$\Gamma$ of $\mcC^b(\Prim\mcB)$ onto the center $Z(\mcB^b)$
of $\mcB^b$.
\end{enumerateroman}
\end{thm}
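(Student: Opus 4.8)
The plan is to realize $\Gamma$ as the inverse of the map which reads off, from a central multiplier, the scalars by which it acts in the primitive quotients. I would begin with the uniqueness assertion in (i), since it is elementary and underlies everything: a $C^\ast$-algebra is semisimple, i.e.\ $\bigcap_{P\in\Prim\mcB}P=\{0\}$, so an element of $\mcB$ is determined by its images $c+P\in\mcB/P$. Hence if $c$ and $c'$ both satisfy $c+P=\mu(P)\mdot(a+P)$ for every $P$, then $c-c'$ lies in every $P$ and must vanish.

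Next I would set up the correspondence underlying (ii). For each $P$ the quotient $\mcB/P$ is primitive, hence prime, and the center of the multiplier algebra of a prime $C^\ast$-algebra is $\mC\mdot\Id$ (a nonscalar central self-adjoint multiplier would have complementary spectral projections yielding two nonzero ideals whose product is zero). A central multiplier $T\in Z(\mcB^b)$ carries each $P$ into itself and so induces a central multiplier of $\mcB/P$, necessarily a scalar $\mu_T(P)\mdot\Id$ with $\abs{\mu_T(P)}\le\|T\|$. The delicate point is continuity of $P\mapsto\mu_T(P)$. Splitting $T$ into self-adjoint parts I may take $T=T^\ast$ and $\mu_T$ real; then for $\lambda\in\mR$ and $a\in\mcB$ the \emph{fixed} element $g_a=(T-\lambda)_+\,a\,(T-\lambda)_+\in\mcB$ satisfies $\|g_a+P\|=(\mu_T(P)-\lambda)_+^2\,\|a+P\|$, and lower semicontinuity of the norm function $P\mapsto\|g_a+P\|$ exhibits $\{\,\mu_T>\lambda\,\}=\bigcup_{a\in\mcB}\{\,\|g_a+P\|>0\,\}$ as a union of open sets; the symmetric argument with $(T-\lambda)_-$ handles $\{\,\mu_T<\lambda\,\}$. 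Thus $\mu_T\in\mcC^b(\Prim\mcB)$, and $T\mapsto\mu_T$ is the candidate inverse of $\Gamma$.

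The substantial step, and the main obstacle, is existence in (i): given $\mu\in\mcC^b(\Prim\mcB)$ and $a\in\mcB$, one must produce $c\in\mcB$ with $c+P=\mu(P)\mdot(a+P)$ for all $P$. It suffices to construct approximate solutions $c_\varepsilon$ with $\sup_P\|c_\varepsilon+P-\mu(P)\mdot(a+P)\|\le\varepsilon$, because semisimplicity then forces $(c_\varepsilon)$ to be Cauchy in $\mcB$ and its limit solves the equation exactly. To build $c_\varepsilon$ I would pass to the enveloping von Neumann algebra $\mcB^{\ast\ast}\supseteq\mcB^b$: each irreducible representation extends normally and sends the center $Z(\mcB^{\ast\ast})$ into the scalars, producing a $\ast$-homomorphism of $Z(\mcB^{\ast\ast})$ into $\ell^\infty(\Prim\mcB)$; a central lift $z$ of $\mu$ then gives $za\in\mcB^{\ast\ast}$ with the correct quotient images, and the whole content is that continuity of $\mu$ forces $za\in\mcB$, i.e.\ $z\in\mcB^b$. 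Alternatively one argues directly on $\Prim\mcB$, using continuity of $\mu$ to cover the ``support'' $\{\,\|a+P\|\ge\delta\,\}$ by finitely many open sets on which $\mu$ is nearly constant and glueing the corresponding scalar multiples of $a$. Either way, the non-Hausdorffness of $\Prim\mcB$ is exactly what makes this the genuinely technical part, and continuity of $\mu$ is precisely the hypothesis needed to remain inside $\mcB$.

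Finally I would assemble (ii). By uniqueness the defining relation is functorial: $(\mu\nu)\ast a=\mu\ast(\nu\ast a)$, $\bar\mu\ast a=(\mu\ast a^\ast)^\ast$, and $(\mu\ast a)b=\mu\ast(ab)=a(\mu\ast b)$ all follow by comparing images in every $\mcB/P$, so $\Gamma(\mu)$ is a central double centralizer and $\Gamma$ is a unital $\ast$-homomorphism. It is isometric, since $\|\mu\ast a\|=\sup_P\abs{\mu(P)}\,\|a+P\|$ yields $\|\Gamma(\mu)\|\le\|\mu\|_\infty$, while choosing $P$ with $\abs{\mu(P)}$ near $\|\mu\|_\infty$ together with $a$ nearly normed by the quotient $\mcB/P$ gives the reverse inequality; in particular $\Gamma$ is injective. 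Surjectivity onto $Z(\mcB^b)$ is the correspondence of the second paragraph: for $T\in Z(\mcB^b)$ the function $\mu_T$ is continuous, and $Ta$ visibly satisfies the defining relation of $\mu_T\ast a$, whence $T=\Gamma(\mu_T)$ by uniqueness. Therefore $\Gamma$ is an isometric $\ast$-isomorphism of $\mcC^b(\Prim\mcB)$ onto $Z(\mcB^b)$.
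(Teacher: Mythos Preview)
The paper does not prove this theorem at all: immediately after the statement it merely cites the original sources (Dauns--Hofmann, Dixmier, and the exposition in Raeburn--Williams), so there is no ``paper's own proof'' to compare your proposal against.

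For what it is worth, your outline follows the standard route (essentially Dixmier's direct proof as presented, e.g., in Raeburn--Williams). Uniqueness via semisimplicity, the construction of $T\mapsto\mu_T$ with continuity obtained from lower semicontinuity of $P\mapsto\|b+P\|$ applied to $b=(T-\lambda)_+\,a\,(T-\lambda)_+$, and the final assembly of (ii) are all correct and cleanly stated. The only place where your sketch is genuinely incomplete is the existence step in (i): in the $\mcB^{\ast\ast}$ approach you assume ``a central lift $z$ of $\mu$'' without saying why such a lift exists or why $za$ lands back in $\mcB$, and in the direct approach you gesture at a finite open cover of $\{\|a+P\|\ge\delta\}$ without explaining how to glue in the absence of Hausdorffness. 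The actual argument uses quasi-compactness of that set together with a noncommutative partition-of-unity device (or, in Dixmier's version, an iterative $\varepsilon/2^n$ approximation), and that is where all the work lies. Your acknowledgement that this is ``the genuinely technical part'' is accurate; as written it is a plan rather than a proof.
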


In particular $\mcC^b(\Prim\mcB)$ acts on $\mcB$ as an algebra
of multipliers. This theorem was first shown by Dauns and
Hofmann, see Chapter~3 of~\cite{Dauns1}. An alternative direct
proof is due to Dixmier, see Theorem~5 of~\cite{Dix5}. Compare
also pp.~223-226 of~\cite{RaeWil}. For a definition of the
adjoint algebra $\mcB^b$ we refer to~Section~3
of~\cite{Lep1}.\\\\
The spectrum $\widehat{\mcB}$ of $\mcB$ is the set of all
equivalence classes of irreducible $\ast$-representations of
$\mcB$ endowed with the initial topology \wrt the natural
map $\widehat{\mcB}\onto\Prim\mcB$. This map allows
us to consider $\mu$ as a function on~$\widehat{\mcB}$. Now
Theorem~\ref{cFm_thm:Dauns_Hofmann}.\textit{(i)} reads as
follows: There exists a unique element $c\in\mcB$ such that
$\pi(c)=\mu(\pi)\,\pi(a)$ for all $\pi\in\widehat{\mcB}$.
There is no peril in dealing with $\pi$ instead of its
equivalence class $[\pi]$ here. If we write
$\widehat{a}(\pi)=\pi(a)$, then the preceding equality
becomes $\widehat{c}(\pi)=\mu(\pi)\,\widehat{a}(\pi)$ so
that $\mu$ emerges as to be a multiplier (on the Gelfand
transform side). Theorem~\ref{cFm_thm:Dauns_Hofmann}
states that the multiplier problem given by $\mu$ and
$a$ has a (unique) solution $c\in\mcB$. If $\mcB=\CM$, then
one might wonder about the regularity of this solution:
What conditions on $\mu$ and $a$ do imply $c\in\LM\,$?\\\\
Finding suitable $\mu$ and proving this kind of
regularity is appropriate to tackle the problem raised
above. In the following remark we localize to a certain
subset $\Omega$ of $\widehat{M}$ losing the uniqueness
of~$c$. We do not assume $\mu$ to be bounded.

\begin{obs}\label{cFm_obs:regularity_for_multipliers}
Let $X\subset\Omega$ be subsets of $\widehat{M}$ and
$\rho\in\Omega\setminus\overline{X}$. If there exists
a dense subspace $\mcQ$ of $\LM$ and a complex-valued,
continuous function $\mu$ on $\Omega$ such that
\begin{enumerateroman}
\item $\mu(\pi)=0$ for all $\pi\in X$ and $\mu(\rho)\neq 0$,
\item for every $a\in\mcQ$ there exists a function $c\in\LM$
satisfying\\
 $\pi(c)=\mu(\pi)\,\pi(a)$ for all $\pi\in\Omega$,
\end{enumerateroman}
then it follows $\bigcap\limits_{\pi\in X}\ker_{\LM}\pi
\not\subset\ker_{\LM}\rho$.
\end{obs}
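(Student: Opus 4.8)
The plan is to produce a single witness $c\in\LM$ that lies in $\bigcap_{\pi\in X}\ker_{\LM}\pi$ but outside $\ker_{\LM}\rho$; exhibiting such a $c$ is precisely what the asserted non-inclusion demands.

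First I would choose the input $a$ with care. Since $\rho$ is (the class of) a nonzero irreducible representation, $\ker_{\LM}\rho$ is a proper closed subspace of $\LM$. A dense subspace can never be contained in a proper closed subspace, so the density of $\mcQ$ forces $\mcQ\not\subset\ker_{\LM}\rho$. Hence I may fix some $a\in\mcQ$ with $\rho(a)\neq 0$.

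Next I would feed this particular $a$ into hypothesis \textit{(ii)}, obtaining $c\in\LM$ with $\pi(c)=\mu(\pi)\,\pi(a)$ for every $\pi\in\Omega$. It then remains only to read off the two required properties from this identity. For $\pi\in X\subset\Omega$, hypothesis \textit{(i)} gives $\mu(\pi)=0$, whence $\pi(c)=0$; thus $c\in\bigcap_{\pi\in X}\ker_{\LM}\pi$. For $\rho\in\Omega$ we get $\rho(c)=\mu(\rho)\,\rho(a)$, and this is nonzero because $\mu(\rho)\neq 0$ is a scalar while $\rho(a)\neq 0$; thus $c\notin\ker_{\LM}\rho$. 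Combining the two facts yields the claim.

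I do not expect any genuine obstacle inside this deduction: once hypotheses \textit{(i)} and \textit{(ii)} are granted, the conclusion follows by direct substitution, and the hypothesis $\rho\in\Omega\setminus\overline{X}$ enters only implicitly, as the consistency condition that permits a continuous $\mu$ with $\mu|_X=0$ and $\mu(\rho)\neq 0$ to exist at all. The real work — and the reason the observation is worth isolating — lies in verifying its hypotheses for the groups at hand: constructing a continuous separating function $\mu$ on $\Omega$ and, above all, establishing the $L^1$-regularity of the solution $c$ to the multiplier equation $\pi(c)=\mu(\pi)\,\pi(a)$. That regularity is exactly the point left open by Theorem~\ref{cFm_thm:Dauns_Hofmann}, which only places $c$ in $\CM$, and it is what the subsequent Littlewood--Paley machinery is designed to supply.
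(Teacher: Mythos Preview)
Your proof is correct and matches the paper's own argument essentially verbatim: pick $a\in\mcQ$ with $\rho(a)\neq 0$ by density, apply \textit{(ii)} to obtain $c$, and read off $\pi(c)=0$ for $\pi\in X$ and $\rho(c)\neq 0$ from \textit{(i)}. Your added remarks on why $\ker_{\LM}\rho$ is proper and closed, and on where the genuine work lies, are accurate elaborations of what the paper leaves implicit.
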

\begin{proof}
Since $\mcQ$ is dense in $\LM$, there exists some
$a\in\mcQ$ such that $\rho(a)\neq 0$. If we choose $\mu$
as in~\textit{(i)} and $c\in\LM$ as in~\textit{(ii)},
then $\pi(c)=0$ for all $\pi\in X$ and $\rho(c)\neq 0$.\qed
\end{proof}

This observation is the guideline for the results of
Section~\ref{sec:invariant_differential_operators}
and~\ref{sec:central_Fourier_multipliers}. In the course
of the proof of Theorem~\ref{cFm_thm:separating_triples}
we will see that if $(W,p,\psi)$ is a separating
triple for $X\subset\Omega\subset\widehat{M}$ and
$\rho\not\in\overline{X}$, then $\mu=p-\psi$
satisfies \textit{(i)} and \textit{(ii)} of
Observation~\ref{cFm_obs:regularity_for_multipliers}
so that Relation~(1) is guaranteed. The author has
verified the existence of separating triples in
a multitude of examples. A sample can be found
in Section~\ref{sec:non_regular_exponential_groups}.

\section{Invariant differential operators}
\label{sec:invariant_differential_operators}

Let $M$ be an exponential solvable Lie group. Its
exponential map $\exp$ is a global diffeomorphism from
its Lie algebra $\frakm$ onto $M$. In particular $M$ is
connected, simply connected. We use the fact that the
Kirillov map $\mcK$ gives a bijection from the coadjoint
orbit space $\frakm^\ast/\coAd(M)$ onto the unitary
dual $\widehat{M}$ of $M$. In particular we take the
definition of $\pi=\mcK(h)=\ind_P^M\chi_h$ via
Pukanszky / Vergne polarizations $\frakp$ at
$h\in\frakm^\ast$ for granted. Here $P$ is the unique
connected subgroup of $M$ with Lie algebra $\frakp$, and
$\chi_h(\exp X)=e^{i\,\langle h,X\rangle}$ the character
of $P$ with differential $i\,h\,|\,\frakp$. These results
can be found in Chapters~4 and~6 of~\cite{Bern_et_alii},
and Chapter~1 of~\cite{LepLud}. Mostly we shall regard
$\mcK$ as a map from $\frakm^\ast$ onto~$\widehat{M}$.
If $\mu$ is a multiplier as in Observation~\ref{cFm_obs:%
regularity_for_multipliers}, then the Kirillov
parametrization allows us to regard $\mu$ as a function
on an $\coAd(M)$-invariant subset $\Omega$ of $\frakm^\ast$
rather than on a subset $\Omega$ of $\widehat{M}$.\\\\
The following remarks apply to arbitrary Lie groups $M$.
If $\pi$ is a strongly continuous representation of $M$ in
a Banach space $E$, then the infinitesimal representation
$d\pi$ of its Lie algebra $\frakm$ is defined by
\[d\pi(X)\varphi=\frac{d}{dt}_{|t=0}\;\pi(\exp tX)\varphi\]
on the subspace $E^\infty$ of $\mcC^\infty$-vectors for $\pi$.
Dixmier and Malliavin proved that $E^{\infty}$ coincides with
the G\r{a}rding space (the dense subspace generated by vectors
of the form $\pi(a)\varphi$ with $a\in\Ccinfty{M}$ and
$\varphi\in\frakH_\rho$), see Theorem~3.3 of~\cite{DixMal}.
The representation $d\pi$ can be extended to the universal
enveloping algebra $\mcU(\frakm)$ of the complexification
$\frakm_{\mC}$ of $\frakm$. If $V\ast a=d\lambda(V)a$ denotes
the representation of $\mcU(\frakm)$ on $\Ccinfty{M}$
obtained by differentiating the left regular representation
of $M$ in $L^1(M)$, then the crucial equality
\begin{equation}\label{cFm_equ:crucial_equality}
\pi(V\ast a)=d\pi(V)\,\pi(a)
\end{equation}
holds for all $V\in\mcU(\frakm)$ and $a\in\Ccinfty{M}$.
The symmetrization map
\[\beta(X_1\mdot\;\ldots\;\mdot X_r)=\frac{1}{r!}
\sum_{\sigma\in S_r} X_{\sigma(1)}\mdot\;\ldots\;
\mdot X_{\sigma(r)}\]
gives an $\Ad(M)$-equivariant, linear isomorphism from
the symmetric algebra $\mcS(\frakm)$ of $\frakm_{\mC}$
onto $\mcU(\frakm)$, see e.g.\ Chapter~3.3 of~\cite{CorGre}.
In particular $\beta$ maps the subspace $Y(\frakm^\ast)$
of $\Ad(M)$-invariants onto the center $Z(\frakm)$
of $\mcU(\frakm)$. We identify $\mcS(\frakm)$ with
$\mcP(\frakm^\ast)$, the algebra of all complex-valued
polynomial functions on $\frakm^\ast$, by means of
the $\Ad(M)$-equivariant isomorphism of algebras
mapping $X\in\frakm$ to the linear function
$h\mapsto -i\,\langle\,h\,,X\,\rangle$
on $\frakm^\ast$.\\\\
We shall regard elements of $\mcU(\frakm)$ as
distributions on $M$ with support in~$\{e\}$, and
$\mcS(\frakm)$ as distributions on~$\frakm$ with
support in~$\{0\}$. Let $j$ be a smooth, strictly
positive function on $\frakm$. If $u$ is a
distribution on $\frakm$ with compact
support $K$, then
\[\langle\,\eta(u)\,,\,\varphi\,\rangle=
\langle\,u\,,\,j(\varphi\circ\exp)\,\rangle\]
defines a distribution $\eta(u)$ on $M$ with compact
support in $\exp(K)$. Let $U$ be the subset of all
$X\in\frakm$ such that $|\,\lambda\,|<\pi$ for
all eigenvalues $\lambda$ of $\ad(X)$. Clearly
$U$ and $V$ are open and invariant. It is known that
$\exp:U\to V$ is a diffeomorphism. Hence $\eta$ yields
a linear isomorphism from the vector space of all
distributions on $\frakm$ with compact support in $U$
onto the distributions on $M$ with compact support
in~$V$. In particular $\eta$ maps $\mcS(\frakm)$ onto
$\mcU(\frakm)$. In addition we suppose that $j$ is
$\Ad(M)$-invariant. If $u$ is $\Ad(M)$-invariant, then
$\eta(u)$ is invariant under interior automorphisms.
Thus $\eta$ maps $Y(\frakm)$ onto $Z(\frakm)$. For
$j\equiv 1$ we recover the symmetrization map $\beta$, for
\[j(X)=\det\left( \frac{1-e^{-\ad(X)}}{\ad(X)}\right)^{1/2}\]
we obtain the Duflo isomorphism $\gamma$. By means of the
character formula given in Th\'eor\`eme~II.1 and~V.2, Duflo
proved in Th\'eor\`eme~IV.1 and ~V.2 of~\cite{Duflo3} that
the restriction $\gamma:Y(\frakm)\to Z(\frakm)$ is an
isomorphism of associative algebras for all solvable and
semi-simple Lie algebras $\frakm$.

An algebraic proof of this fact
can be found in~Paragraphs~4 and 5 of~\cite{KashVer}.\\\\
In Theorem~2 of~\cite{Duflo1} Duflo proved 
$d\pi(\gamma(p))=p(h)\mdot\Id$ for all
$p\in Y(\frakm^\ast)$, $h\in\frakm^\ast$, and
$\pi=\mcK(h)$. Generalizing this property of the
pair $(\gamma(p),p)$ we state

\begin{defn}\label{cFm_defn:Duflo_pair}
Let $\Omega$ be an $\coAd(M)$-invariant subset of
$\frakm^\ast$, $W\in\mcU(\frakm)$, and
$p\in\mcP(\frakm^\ast)$. We say that $(W,p)$ is
a Duflo pair \wrt $\Omega$ if
\begin{equation}\label{cFm_equ:Duflo_pair}
d\pi(W)=p(h)\mdot\Id
\end{equation}
for all $h\in\Omega$ and $\pi=\mcK(h)$.
\end{defn}

This equality implies that $p$ is constant on all
$\coAd(M)$-orbits contained in $\Omega$ because $\mcK$ is
constant on $\coAd(M)$-orbits. We stress that we do not assume
$W\in Z(\frakm_\mC)$ nor $p\in Y(\frakm^\ast)$. If $(W,p)$ is
a Duflo pair \wrt $\Omega$ and $a\in\Ccinfty{M}$, then it
follows from Equation~(\ref{cFm_equ:crucial_equality})
and~(\ref{cFm_equ:Duflo_pair}) that $b=W\ast a\in\Ccinfty{M}$
satisfies $\pi(b)=p(h)\,\pi(a)$ for all $h\in\Omega$
and $\pi=\mcK(h)$.

Let $\frakn$ be a coabelian, nilpotent ideal of
$\frakm$, and $\Der(\frakm,\frakn)$ the subalgebra
of all derivations $D$ of $\frakm$ such that
$D\mdot\frakm\subset\frakn$. A linear functional
$h\in\frakm^\ast$ is said to be in general position
if $h(\fraka)\neq 0$ for all non-trivial
$\Der(\frakm,\frakn)$-invariant ideals $\fraka$
of $[\frakm,\frakm]$. We define $X_0=X_0(\frakm,\frakn)$
as the set of all $h\in\frakm^\ast$ in general
position satisfying the stabilizer condition
$\frakm=\frakm_h+\frakn$. Clearly $X_0$ is
$\coAd(M)$-invariant and saturated in the sense
that $X_0=X_0+[\frakm,\frakm]^\bot$. Further $X_0$
contains all $G$-orbits $X=\coAd(G)\tilde{f}$  as
in Section~\ref{sec:introduction}. We point out
that $X_0\neq\emptyset$ implies
$\frakz\frakn\subset\frakz\frakm$: If $h\in X_0$,
then $[\frakm,\frakz\frakn]=[\frakm_h,\frakz\frakn]$
is a $\Der(\frakm,\frakn)$-invariant subspace of
$\ker h$, and hence $[\frakm,\frakz\frakn]=0$.
If in addition $\frakn$ is the nilradical (i.e.~the
largest nilpotent ideal) of $\frakm$, then
$\frakz\frakn=\frakz\frakm$.\\\\
Let $\Omega_0$ be the set of all $h\in\frakm^\ast$ such
that $h'$ is in the closure of $X_0'$ in $\frakn^\ast$.
Our interest lies in the subalgebra $I(X_0)$
of all $p\in\mcS(\frakm)$ such that $p$ is
$\coAd(M)$-invariant on $\Omega_0$.\\\\
A trivial extension $\frakn=\dot{\frakn}\times\fraka$
of a Lie algebra $\dot{\frakn}$ is a direct product
with a commutative one. In particular a trivial
extension of a (k+1)-step nilpotent filiform
algebra contains
\begin{equation*}
\frakn\underset{1}{\supset}\frakc\underset{1}{\supset}
C^1\frakn+\frakz\frakn\underset{d-1}{\supset}C^1\frakn
\underset{1}{\supset}\ldots\underset{1}{\supset}
C^k\frakn\underset{1}{\supset}\{0\}
\end{equation*}
as a descending series of ideals. Here $\frakc$ is
commutative and $d=\dim\frakz\frakn$. For $k=d=1$ this
is the 3-dimensional Heisenberg algebra. If $k\ge 2$,
then $\frakc$ is the centralizer of $C^1\frakn$
in $\frakn$, and hence a characteristic ideal, too.

\begin{thm}\label{cFm_thm:existence_of_Duflo_pairs}
Let $\frakm$ be a non-nilpotent, exponential solvable
Lie algebra. If its nilradical $\frakn$ is
\begin{enumerate}
\item a trivial extension of a filiform algebra of
arbitrary dimension,
\item a $5$-dimensional Heisenberg algebra,
\item a trivial extension of $\frakg_{5,2}$,
\end{enumerate}
then $X_0$ is a non-empty, algebraic subset of $\frakm^\ast$
and the following conditions are satisfied:
\begin{enumerateroman}
\item There exist realvalued polynomial functions
$\Gamma=(\Gamma_1,\ldots,\Gamma_k)$ on $\frakm^\ast$
which are $\Ad(M)$-invariant on $X_0$ and induce
a homeomorphism $\overline{\Gamma}$ from $X_0/\coAd(M)$
onto an open subset $W$ of $\mR^k$ admitting a rational
inverse.
\item For every critical $g\in\Omega_0\setminus%
\overline{X}_0$ there exists a Duflo-pair $(W,p)$
\wrt $\Omega_0$ such that $p$ has constant term $0$ 
and $p(g)\neq 0$.
\end{enumerateroman}
 If $\frakn$ is commutative
or a trivial extension of $\frakg_{5,4}$ or
$\frakg_{5,6}$, then necessarily $X_0=\emptyset$.
If $\frakn$ is a central extension of $\frakg_{5,3}$, then the
situation is slightly more complicated: For certain
singular $g\in\Omega_0\setminus X_0$ one cannot
find $(W,p)$ such that $p(g)\neq 0$.
\end{thm}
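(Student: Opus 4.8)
The plan is to treat the three admissible nilradicals separately but along a common template, reducing everything to explicit computations in coordinates adapted to the descending series of ideals displayed before the theorem. In each case I would first fix a basis of $\frakn$ compatible with $\frakn \supset \frakc \supset C^1\frakn + \frakz\frakn \supset \cdots \supset \{0\}$, and then describe the most general non-nilpotent, exponential solvable $\frakm$ with this nilradical by specifying how a complement of $\frakn$ acts on $\frakn$ by derivations; the exponential hypothesis restricts these derivations to have no non-zero purely imaginary eigenvalues, which keeps the orbit geometry rational. Since $\frakm$ is solvable we have $[\frakm,\frakm] \subset \frakn$, so with the normal form in hand I would compute $[\frakm,\frakm]$, enumerate the $\Der(\frakm,\frakn)$-invariant ideals $\fraka \subset [\frakm,\frakm]$, and read off general position as the non-vanishing of finitely many coordinate functions. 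Imposing the stabilizer condition $\frakm = \frakm_h + \frakn$ on this locus then exhibits $X_0$ as a non-empty algebraic set in the three good cases.

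For part (i) I would construct the $\Gamma_i$ as polynomial functions on $\frakm^\ast$ whose restrictions to $X_0$ are $\coAd(M)$-invariant and separate orbits. On the nilradical side the generic coadjoint orbits of $N$ carry polynomial Casimir invariants --- for a filiform or trivially extended filiform algebra these descend from the action of the single non-central generator as a nilpotent shift --- and I would combine these with $\Ad(M)$-covariant coordinate factors so that the products are constant along the full $\coAd(M)$-orbits inside $X_0$. To show that $\overline{\Gamma}$ is a homeomorphism onto an open $W \subset \mR^k$ with rational inverse, I would exhibit a rational cross-section of $X_0 \to X_0/\coAd(M)$: using the stabilizer condition $\frakm = \frakm_h + \frakn$ to normalize representatives, I would pin down all but $k$ coordinates and then solve for the remaining $k$ as rational functions of the $\Gamma_i$. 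Separation of orbits, openness of $W$, and rationality of the inverse all follow from this cross-section.

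For part (ii) the decisive step is producing, for each critical $g \in \Omega_0 \setminus \overline{X}_0$, a polynomial $p$ invariant on $\Omega_0$ together with an enveloping-algebra element $W$ realizing $d\pi(W) = p(h)\,\Id$ for all $h \in \Omega_0$. I would first build $p$ as a polynomial combination of the $\Gamma_i$ (or of the numerators of the underlying rational invariants), normalized to have constant term $0$; the criticality of $g$ --- that $g'$ lies in the closure of $X_0'$ yet $g \notin \overline{X}_0$ --- is what guarantees that at least one such invariant is non-zero at $g$, so that one can arrange $p(g) \neq 0$. To obtain $W$ I cannot invoke Duflo's theorem directly, since it yields $W = \gamma(p)$ only for globally $\Ad(M)$-invariant $p$, whereas here $p$ is invariant only on $\Omega_0$. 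Instead I would realize $\pi = \mcK(h)$ by inducing $\chi_h$ from a Pukanszky--Vergne polarization so that the operators $d\pi(X_i)$ become explicit multiplication and differentiation operators on $L^2(\mR^j)$, and then choose $W$ as a symmetrized polynomial in the $X_i$ engineered so that the associated differential operator reduces to the scalar $p(h)$ exactly on $\Omega_0$; the crucial equality $\pi(W \ast a) = d\pi(W)\,\pi(a)$ of Section~\ref{sec:invariant_differential_operators} then gives the Duflo-pair identity.

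I expect the construction of $W$ in part (ii) to be the main obstacle, because off $X_0$ the element $\gamma(p)$ associated to a merely $\Omega_0$-invariant $p$ need not lie in the centre of $\mcU(\frakm)$, so $d\pi(\gamma(p))$ need not be scalar; controlling this defect forces one to add lower-order correction terms to $W$, and verifying their effect relies on the explicit filiform polarizations of each case. Finally, the negative assertions I would dispatch structurally: for a commutative nilradical, or a trivial extension of $\frakg_{5,4}$ or $\frakg_{5,6}$, I would show that general position and the stabilizer condition $\frakm = \frakm_h + \frakn$ cannot hold simultaneously, whence $X_0 = \emptyset$; and for a central extension of $\frakg_{5,3}$ I would exhibit the singular $g$ at which every $\Omega_0$-invariant polynomial with vanishing constant term is forced to vanish, which shows that no separating $p$ exists there and that the method of Duflo pairs alone does not suffice.
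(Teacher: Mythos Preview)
Your proposal is correct and follows essentially the same route as the paper, which explicitly omits the details but sketches the identical case-by-case strategy: determine the possible stabilizers $\frakm$ for each admissible nilradical, pick representatives for the $\coAd(M)$-orbits in $X_0$, read off polynomial semi-invariants from the explicit coadjoint action, and then compute $W\in\mcU(\frakm)$ directly (with $W=\gamma(p)$ the natural first guess, but not compulsory). Your anticipation that lower-order correction terms may have to be added to the symmetrized $p$ is exactly what happens in the worked examples of Section~\ref{sec:non_regular_exponential_groups}.
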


Note that Theorem~\ref{cFm_thm:existence_of_Duflo_pairs}
covers all nilpotent Lie algebras $\frakn$ of
dimension~$\le 5$. We omit the details of its proof,
which is a case by case study. The essential steps are:
First the possible stabilizers $\frakm$ with a given
nilradical $\frakn$ are to be determined. In each case
we pick out representatives $f$ for the $\coAd(M)$-orbits
in~$X_0$. From the coadjoint representation $\coAd(M)f$
we can read off non-trivial polynomial functions $p$
on $\frakm^\ast$ which are constant on all
$\coAd(M)$-orbits in $X_0$. Finally one has to
compute suitable elements $W\in\mcU(\frakm)$. Here
the choice $W=\gamma(p)$ is natural, but not compulsory.

%\newpage

\section{Central Fourier multipliers}
\label{sec:central_Fourier_multipliers}

Denote by $\frakz\frakm$ the center of the Lie algebra
$\frakm$ of $M$. Let $l=\dim\frakz\frakm$ and
$k=\dim\frakm/\frakz\frakm$. We choose $b_1,\ldots,b_k$
in $\frakm$ whose canonical images in $\frakm/\frakz$ form
a Malcev basis of $\frakm/\frakz\frakm$. In particular
$\mcB=\{b_1,\ldots,b_k\}$ is a coexponential basis for
$\frakz\frakm$ in $\frakm$: If $Z(M)$ is the center
of $M$, $q:M\onto M/Z(M)$ the quotient map, and
$\Phi_1(x)=\exp(x_1b_1)\,\mdot\,\ldots\,\mdot\,\exp(x_kb_k)$,
then $q\circ\Phi_1$ is a diffeomorphism from $\mR^k$ onto
$M/Z(M)$. Equivalently, $\Phi(x,z)=\Phi_1(x)\exp(z)$ is a
global diffeomorphism from $\mR^k\times\frakz\frakm$ onto
$M$. This is a canonical coordinate system of the second
kind. If $c$ is a function on $M$, then by abuse of
notation $c\circ\Phi$ is again denoted by $c$.\\\\
We require partial Fourier transforms \wrt to the central
variable: If $c\in\LM$, then $z\mapsto f(x,z)$ is in
$L^1(\mR^l)$ for almost all $x\in\mR^k$ by Fubini's
theorem. For these $x$ and all $\xi\in\frakz\frakm^\ast$
we define
\[\widehat{c}(x,\xi)=\int_{\frakz\frakm}\;c(x,z)
e^{-i\langle\xi,z\rangle}\;dz\;.\]
For fixed $\xi\in\frakz\frakm^\ast$ the function
$x\mapsto\widehat{c}(x,\xi)$ is in $L^1(\mR^k)$.

\begin{defn}\label{cFm_defn:central_Fourier_multiplier}
A complex-valued function $\psi$ on $\frakz\frakm^\ast$ is
called a central Fourier multiplier if for all $a$ in
$\Ccinfty{M}$ there exists a (unique) smooth function $c$
in $\LM$ such that $\widehat{c}(x,\xi)=\psi(\xi)\,
\widehat{a}(x,\xi)$ holds for all $x\in\mR^k$ and
$\xi\in\frakz\frakm^\ast$.
\end{defn}

If $\psi$ is a central Fourier multiplier, so is the
function $\xi\mapsto\psi(-\xi)$. Note that $h\mapsto%
\psi(h\,|\,\frakz\frakm)$ defines an $\Ad(M)$-invariant
function on $\frakm^\ast$, and hence a function on~%
$\frakm^\ast/\coAd(M)\cong\widehat{M}$. A first
consequence is

\begin{lem}\label{cFm_lem:central_multipliers_and_%
representations}
If $c,a\in\LM$ such that $\widehat{c}(x,\xi)=\psi(-\xi)\,
\widehat{a}(x,\xi)$ for almost all $x$ and all $\xi$, then
$\pi(c)=\psi(h\,|\,\frakz\frakm)\,\pi(a)$ for all
$h\in\frakm^\ast$ and $\pi=\mcK(h)$.
\end{lem}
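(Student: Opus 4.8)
The plan is to compute $\pi(c)$ directly from the integrated form $\pi(c)=\int_M c(m)\,\pi(m)\,dm$ and to exploit that the central subgroup $Z(M)=\exp(\frakz\frakm)$ is represented by scalars. First I would record the central character of $\pi=\mcK(h)$. Since $\frakz\frakm$ lies in the stabilizer $\frakm_h$ (the radical of the form $B_h(X,Y)=\langle h,[X,Y]\rangle$), it is contained in every Pukanszky/Vergne polarization $\frakp$ at $h$; consequently $\chi_h$ is defined on $\exp(\frakz\frakm)$ and, using $d\pi(Z)=i\langle h,Z\rangle\,\Id$ for central $Z$, one gets $\pi(\exp z)=\chi_h(\exp z)\,\Id=e^{i\langle h,z\rangle}\,\Id$ for all $z\in\frakz\frakm$. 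This is the single representation-theoretic input.

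Next I would pass to the coordinates $\Phi(x,z)=\Phi_1(x)\exp(z)$ and note that left Haar measure pulls back to $\rho(x)\,dx\,dz$ with a strictly positive density $\rho$ depending only on $x$. Indeed, left translation by a central element $\exp(z_0)$ sends $(x,z)$ to $(x,z+z_0)$ and therefore preserves $dz$, which forces $\rho$ to be independent of the central variable. This is precisely the point that makes the argument go through: the very same weight $\rho(x)$ accompanies both $\pi(c)$ and $\pi(a)$ and will cancel.

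With these two facts the computation is short. Writing $\xi_0=h\,|\,\frakz\frakm$ and using $\pi(\Phi_1(x)\exp z)=e^{i\langle h,z\rangle}\,\pi(\Phi_1(x))$, Fubini's theorem (legitimate because $c\in\LM$ and $\|\pi(m)\|=1$) gives
\[\pi(c)=\int_{\mR^k}\Bigl(\int_{\frakz\frakm}c(x,z)\,e^{i\langle h,z\rangle}\,dz\Bigr)\pi(\Phi_1(x))\,\rho(x)\,dx=\int_{\mR^k}\widehat{c}(x,-\xi_0)\,\pi(\Phi_1(x))\,\rho(x)\,dx,\]
since the inner integral is exactly $\widehat{c}(x,-\xi_0)$. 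The hypothesis $\widehat{c}(x,\xi)=\psi(-\xi)\,\widehat{a}(x,\xi)$, evaluated at $\xi=-\xi_0$, reads $\widehat{c}(x,-\xi_0)=\psi(\xi_0)\,\widehat{a}(x,-\xi_0)$ for almost every $x$. Pulling the scalar $\psi(\xi_0)$ out of the integral and running the identical computation backwards for $a$ yields $\pi(c)=\psi(\xi_0)\,\pi(a)=\psi(h\,|\,\frakz\frakm)\,\pi(a)$, which is the claim; the sign convention $\psi(-\xi)$ in the hypothesis is exactly what produces $\psi(h\,|\,\frakz\frakm)$ rather than its reflection.

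The only genuine obstacle is measure-theoretic bookkeeping rather than anything deep: one must justify that $\rho$ is independent of $z$ (done above) so that the $x$-weight cancels, confirm via Fubini that $z\mapsto c(x,z)$ is integrable for almost every $x$ so that $\widehat{c}(x,\cdot)$ is defined where it is used, and check that the ``almost all $x$'' in the hypothesis is harmless once everything is integrated against $\rho(x)\,dx$. No polarization-dependent choices enter, so the identity holds for the class $\pi=\mcK(h)$ independently of how $\pi$ is realized.
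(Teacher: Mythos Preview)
Your proof is correct and follows essentially the same route as the paper: identify the central character $\pi(\exp z)=e^{i\langle h,z\rangle}$ via $\frakz\frakm\subset\frakp$, split the Haar integral over $M$ into a $z$-integral (recognized as the partial Fourier transform at $-h\,|\,\frakz\frakm$) and an $x$-integral, and then apply the hypothesis at $\xi=-h\,|\,\frakz\frakm$. The only cosmetic difference is that the paper invokes Weil's formula together with the triviality of $\Delta_{M,Z}$ (and the Malcev-basis choice for $b_1,\dots,b_k$) to obtain $\int_M c\,dm=\int_{\mR^k}\int_{\frakz\frakm}c(x,z)\,dz\,dx$ with density $\rho\equiv 1$, whereas you carry an unspecified weight $\rho(x)$ and argue directly from central left-invariance that it is independent of $z$; since the same $\rho(x)$ appears in both $\pi(c)$ and $\pi(a)$, this comes to the same thing.
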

\begin{proof}
Let $\frakp$ be a Pukanszky polarization at $h\in\frakm^\ast$
so that $\pi=\ind_P^M\chi_h$. Since $\frakz\frakm\subset\frakp$,
it follows $\pi(\exp z)=e^{i\langle h,z\rangle}$ for all
$z\in\frakz\frakm$. The modular function $\Delta_{M,Z}$ is
trivial so that Weil's formula gives $\int_M c(m)\,dm=
\int_{\mR^k}\int_{\frakz\frakm} c(x,z)\,dzdx$ for $c\in\LM$.
Here $dm$ denotes the Haar measure of $M$, and $dx$ and $dz$
denote the Lebesgue measures on $\mR^k$ and $\frakz\frakm$
respectively. Now we obtain
\begin{align*}
\pi(c)\varphi&=\int_{\mR^k}\int_{\frakz\frakm}\;c(x,z)\,
\pi(\Phi_1(x))\pi(\exp z)\varphi\;dz\;dx\\
&=\int_{\mR^k}\;\widehat{c}(x,-h\,|\,\frakz\frakm)\,
\pi(\Phi_1(x))\varphi\;dx\\
&=\psi(h\,|\,\frakz\frakm)\;\pi(a)\varphi
\end{align*}
for every $\varphi$ in the representation space
of $\pi$.\qed
\end{proof}

The next definition is motivated by concrete applications
in the investigation of primitive $\ast$-regularity for
exponential Lie groups.

\begin{defn}\label{cFm_defn:separating_triples}
Let $X\subset\Omega$ be $\coAd(M)$-invariant subsets of
$\frakm^\ast$. We say that $(W,p,\psi)$ is a separating
triple for $X$ in $\Omega$ if
\begin{enumerateroman}
\item $\;(W,p)$ is a Duflo pair \wrt $\Omega$,
\item $\;\psi$ is a central Fourier multiplier,
\item $\;h\in\overline{X}$ if and only if $h\in\Omega$ and 
$p(h)=\psi(h\,|\,\frakz\frakm)$.
\end{enumerateroman}
\end{defn}

Condition \textit{(iii)} states that $p,\psi$ characterize the
closure of $X$ in $\Omega$, which is the closure of $X$ in
$\frakm^\ast$ if $\Omega$ is closed. Several variants of this
definition are possible: For example, one might want to consider
(finite) sets of triples $(W_\nu,p_\nu,\psi_\nu)$ such that
$h\in\overline{X}$ if and only if
$p_\nu(h)=\psi_\nu(h\,|\,\frakz\frakm)$ for all $\nu$.
The existence of separating triples is a strong assumption
making it easy to prove

\begin{thm}\label{cFm_thm:separating_triples}
If $(W,p,\psi)$ is a separating triple for $X$ in $\Omega$,
then $g\in\Omega\setminus\overline{X}$ implies
$$\bigcap\limits_{h\in X}\ker_{\LM}\mcK(h)\;\not\subset
\;\ker_{\LM}\mcK(g)\;.$$
\end{thm}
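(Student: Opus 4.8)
The plan is to reduce the statement to Observation~\ref{cFm_obs:regularity_for_multipliers} by building from the separating triple a single multiplier that isolates $g$ from $X$. Concretely, I would set $\mu(h)=p(h)-\psi(h\,|\,\frakz\frakm)$ and, via the Kirillov parametrization, regard $\mu$ as a function on the $\coAd(M)$-invariant set $\Omega\subset\frakm^\ast$. Since $(W,p)$ is a Duflo pair, $p$ is constant on the $\coAd(M)$-orbits contained in $\Omega$; and $h\mapsto\psi(h\,|\,\frakz\frakm)$ is $\Ad(M)$-invariant by the remark following Definition~\ref{cFm_defn:central_Fourier_multiplier}. Hence $\mu$ descends to a well-defined function on $\Omega$ viewed as a subset of $\widehat{M}$. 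Putting $\rho=\mcK(g)$ and $\mcQ=\Ccinfty{M}$, it then suffices to verify conditions \textit{(i)} and \textit{(ii)} of the Observation.

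Condition \textit{(i)} is immediate from part \textit{(iii)} of Definition~\ref{cFm_defn:separating_triples}, which says precisely that inside $\Omega$ the zero set of $\mu$ equals $\overline{X}$. As $X\subset\overline{X}$ this gives $\mu\equiv0$ on $X$, while $g\in\Omega\setminus\overline{X}$ gives $\mu(g)\neq0$. The continuity of $\mu$ demanded in the Observation is unproblematic: $p$ is a polynomial, and in any case only the pointwise values of $\mu$ enter the proof of the Observation.

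The substance of the argument lies in condition \textit{(ii)}, where the two halves of the triple are glued together. Fix $a\in\Ccinfty{M}$. From the Duflo-pair half I obtain $b=W\ast a\in\Ccinfty{M}$ with $\pi(b)=p(h)\,\pi(a)$ for all $h\in\Omega$ and $\pi=\mcK(h)$, by the remark following Definition~\ref{cFm_defn:Duflo_pair}. For the multiplier half I would use that $\xi\mapsto\psi(-\xi)$ is again a central Fourier multiplier; applying Definition~\ref{cFm_defn:central_Fourier_multiplier} to it yields a smooth $c'\in\LM$ with $\widehat{c'}(x,\xi)=\psi(-\xi)\,\widehat{a}(x,\xi)$, whence $\pi(c')=\psi(h\,|\,\frakz\frakm)\,\pi(a)$ by the preceding Lemma on central multipliers and representations. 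Setting $c=b-c'\in\LM$ then gives $\pi(c)=\bigl(p(h)-\psi(h\,|\,\frakz\frakm)\bigr)\pi(a)=\mu(\mcK(h))\,\pi(a)$ for every $h\in\Omega$, which is exactly \textit{(ii)}. Observation~\ref{cFm_obs:regularity_for_multipliers} now yields $\bigcap_{h\in X}\ker_{\LM}\mcK(h)\not\subset\ker_{\LM}\mcK(g)$.

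Since so much is prepared in advance, I do not expect a serious obstacle in the assembly itself. The one point requiring care is the sign bookkeeping: one must feed the multiplier $\xi\mapsto\psi(-\xi)$, and not $\psi$ itself, into Definition~\ref{cFm_defn:central_Fourier_multiplier}, precisely so that the preceding Lemma returns $\psi(h\,|\,\frakz\frakm)$ with the sign that makes the polynomial and central contributions cancel on $\overline{X}$ and only there.
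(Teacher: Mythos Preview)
Your proposal is correct and is essentially the paper's own argument, only routed explicitly through Observation~\ref{cFm_obs:regularity_for_multipliers}: the paper picks $a\in\Ccinfty{M}$ with $\rho(a)\neq 0$, sets $b=W\ast a$, obtains $c$ from the multiplier $\xi\mapsto\psi(-\xi)$ via Lemma~\ref{cFm_lem:central_multipliers_and_representations}, and checks directly that $b-c$ separates, then remarks afterward that this is an instance of the Observation with $\mu=p-\psi$. You have also handled the sign issue with $\psi(-\xi)$ exactly as the paper does.
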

\begin{proof}
Let $\rho=\mcK(g)$. Since $\Ccinfty{M}$ is dense in $\LM$, there
exists some $a\in\Ccinfty{M}$ such that $\rho(a)\neq 0$. Define
$b=W\ast a$. Since $\psi$ is a Fourier multiplier, there exists
a unique smooth function $c\in\LM$ such that $\widehat{c}(x,\xi)=
\psi(-\xi)\,\widehat{a}(x,\xi)$. Now $b-c$ solves the problem:
Since $g\in\Omega\setminus\overline{X}$, we obtain
$$\rho(c)=\psi(\,g\,|\,\frakz\frakm\,)\,\rho(a)\neq
p(g)\,\rho(a)=\rho(b)$$
because $(W,p)$ is a Duflo pair. Furthermore, if $h\in X$ and
$\pi=\mcK(h)$, then
$$\pi(c)=\psi(\,h\,|\,\frakz\frakm\,)\,\pi(a)=
p(h)\,\pi(a)=\pi(b)$$
by Lemma~\ref{cFm_lem:central_multipliers_and_representations}.
This proves our proposition.\qed
\end{proof}
In this proof we only used the fact that
$\widehat{c}(x,h\,|\,\frakz\frakm)=\psi(h\,|\,\frakz\frakm)
\;\widehat{a}(x,h\,|\,\frakz\frakm)$ holds for all
$h\in\Omega$, instead of the full multiplier property
of~$\psi$. The continuity of $\psi$ on the closure of
$\{h\,|\,\frakz\frakm:h\in\Omega\}$ in $\frakz\frakm^\ast$
is necessary for it. If we regard $\psi$ as a function on
$\frakm^\ast$, then $p-\psi=0$ on $X$ and $\neq 0$ in $g$
so that the preceding proposition can be viewed as a
special case of the considerations in Observation~%
\ref{cFm_obs:regularity_for_multipliers}.

\section{More about central Fourier multipliers}
\label{sec:more_about_Fourier_multipliers}

As before we use a coexponential basis $\mcB$ for
$\frakz\frakm$ in $\frakm$ to define coordinates of
the second kind for $M$. We suppress the coordinate
diffeomorphism $\Phi$. Recall that $l=\dim\frakz\frakm$
and $k=\dim\frakm/\frakz\frakm$. To begin with we
introduce a subspace~$\mcQ$ of~$\LM$ which will play
a decisive role in our discussion of central Fourier
multipliers.
\begin{defn}\label{cFm_defn:subspace_Q_of_LM}
Let $\mcQ$ denote the vector space of all smooth
functions $a$ on $M$ such that
\begin{enumerate}
\item there is a compact subset $L$ of $\mR^k$ such
that $a(x,z)=0$ whenever $x\not\in L$,
\item there exists some $r_0>0$ such that for all
multi-indices $\alpha\in\mN^k$ and $\beta\in\mN^l$
the functions
\[(x,z)\mapsto (1+|z|)^{l+r_0}\;%
(D_x^\alpha D_z^\beta a)\,(x,z)\]
vanish at infinity.
\end{enumerate}
\end{defn}
If $a\in\mcQ$, then the functions $(1+|z|)^{n+r_0}%
(D_x^\alpha D_z^\beta a)$ are bounded. Alternatively
this property could have been used for a different
definition of $\mcQ$. Another possibility is to allow
the exponent $r_0$ to depend on the multi-indices
$\alpha$ and $\beta$. These alternate subspaces serve
just as well in many respects in the context of central
Fourier multipliers.\\\\
Note that $\Ccinfty{M}\subset\mcQ\subset\LM$ has the
following nice properties: The definition of $\mcQ$ does
not depend on the choice of the coexponential basis for
$\frakz\frakm$ in $\frakm$. \ifthenelse{\boolean{details}}
{We provide a short proof of this fact. Let $\mcB$ and
$\mcB'$ be coexponential bases for $\frakz\frakm$ in
$\frakm$ which define two coordinate diffeomorphisms
$\Phi$ and $\Phi'$. Let $T_1$ be the diffeomorphism on
$\mR^k$ such that $q\circ\Phi_1=(q\circ\Phi_1')\circ T_1$
and let $T$ be the diffeomorphism on
$\mR^k\times\frakz\frakm$ such that $\Phi=\Phi'\circ T$.
It follows easily that
\[T(x,z)=(T_1(x),z+\lambda(x))\]
with the smooth function $\lambda:\mR^k\to\frakz\frakm$
defined by
\[\exp(\lambda(x))=\Phi_1'(T_1(x))^{-1}\;\Phi_1(x)\;.\]
By induction we obtain
\[D_x^\alpha D_z^\beta(a\circ\Phi)(x,z)=\sum\limits_{\mu,\nu}
\;g_{\mu,\nu}(x)\;(D_{x'}^\mu D_{z'}^\nu(a\circ\Phi'))
(T_1(x),z+\lambda(x))\;.\]
Since $\lambda$ is bounded on $K$, we get
\[(1+|z|)^r\le C(1+|z+\lambda(x)|)^r\]
for some $C>0$. Since the smooth functions $g_{\mu,\nu}$ are
also bounded on $K$, we see that $a\circ\Phi'\in\mcQ'$
implies $a\circ\Phi\in\mcQ$.\\\\}{}
Clearly $\mcQ$ is a dense $\ast$-subalgebra of $\LM$ and a
$\lambda(M)$-invariant subspace where $\lambda$ denotes the
left-regular representation of $M$ in $\LM$. Furthermore
$\mcQ$ is contained in the subspace $\LM^{\infty,\lambda}$
of $\mcC^{\infty}$-vectors of $\lambda$. In particular
$\mcU(\frakm_{\mC})$ acts on $\mcQ$.\\\\
The role of $\mcQ$ in the context of central Fourier
multiplier problems is as follows: Assume that the multiplier
$\psi$ on $\frakz\frakm^\ast$ is a continuous function of
polynomial growth, and hence defines a tempered distribution.
Let $u_\psi\in\mcS'$ such that $\widehat{u_\psi}=\psi$.
Convolution with $u_\psi$ \wrt the central variable defines
a linear operator $T_\psi a=u_\psi\ast a$. In the following
we will give sufficient conditions on~$\psi$ (or~$u_\psi$)
which guarantee that $T_\psi a$ is well-defined and
in~$\mcQ$ for all~$a\in\mcQ$. From
\[(T_\psi a)\widehat{\;}\,(x,\xi)=(u_\psi\ast a)
\widehat{\;}\,(x,\xi)=\psi(\xi)\,\widehat{a}(x,\xi)\]
it will follow that $c=T_\psi a\in\mcQ$ is a solution of
the multiplier problem given by~$\psi$ and $a\in\mcQ$.
There is a twofold reason for calling $T_\psi$ a multiplier
operator: on the one hand it is multiplication by $\psi$
on the Fourier transform side, on the other hand it holds
$T_\psi(a\ast b)=(T_\psi a)\ast b$ and $(T_\psi a)^\ast
\ast b=a^\ast\ast (T_{\psi^\ast}b)$ so that
$T_\psi\in\mcQ^b$ in the spirit of Section~3
of~\cite{Lep1}.\\\\
As a starting point we choose the following well-known
result of Fourier analysis in $\mR^n$. Here one should
think of $\mR^n$ as a subspace of $\frakz\frakm$.

\begin{lem}
If $\psi\in\mcC^{n+1}(\mR^n)$ such that $D_\xi^\gamma\psi
\in L^1(\mR^n)$ for all $|\gamma|\le n+1$, then the inverse
Fourier transform $k$ of $\psi$ is a continuous function such
that $|k(z)|\le C|z|^{-(n+1)}$ for all $z\in\mR^n$, for some
$C>0$. Clearly $\langle u_\psi,\varphi\rangle=\int_{\mR^n}
k(y)\varphi(y)\,dy$.
\end{lem}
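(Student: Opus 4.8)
The plan is to exploit the elementary duality between differentiation of $\psi$ and multiplication of $k$ by polynomials, and then to convert the resulting bounds on the monomials $z^\gamma k(z)$ into the claimed radial decay. First I would dispose of regularity and boundedness: applying the hypothesis with $\gamma=0$ shows $\psi\in L^1(\mR^n)$, so its inverse Fourier transform $k$ is defined by an absolutely convergent integral, is continuous by dominated convergence, and satisfies $|k(z)|\le\|\psi\|_{L^1}$ for all $z\in\mR^n$.

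The core of the argument is the identity $z^\gamma k(z)=c_\gamma\,(D_\xi^\gamma\psi)^\vee(z)$ for each multi-index $\gamma$ with $|\gamma|\le n+1$, where $c_\gamma$ is a unimodular constant and $g^\vee$ denotes the inverse Fourier transform of $g$. I would prove it by integrating the defining integral of $k$ by parts $|\gamma|$ times, transferring the derivatives from the exponential onto $\psi$. The hypothesis $D_\xi^\gamma\psi\in L^1(\mR^n)$ then shows the right-hand side is again the inverse Fourier transform of an $L^1$-function, whence $|z^\gamma k(z)|\le\|D_\xi^\gamma\psi\|_{L^1}$ for all $z$. Specializing to the $n$ monomials $z_j^{n+1}$ and using the norm comparison $|z|^{n+1}\le n^{(n+1)/2}\sum_{j=1}^n|z_j|^{n+1}$ produces a single constant $C>0$ with $|z|^{n+1}|k(z)|\le C$, i.e. $|k(z)|\le C|z|^{-(n+1)}$; combined with the global bound $|k|\le\|\psi\|_{L^1}$ near the origin, this gives the estimate for all $z\in\mR^n$ after enlarging $C$ if necessary.

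The main obstacle will be justifying that the boundary terms vanish in this iterated integration by parts, and this is where the full strength of the hypothesis enters. Since $\psi$ and all its partial derivatives up to order $n+1$ are continuous and integrable, along each coordinate line the relevant derivative of $\psi$ is a $\mcC^1$-function whose own derivative is integrable, hence has limits at $\pm\infty$; these limits must vanish because the function itself lies in $L^1$. Thus every boundary contribution is zero and the identity holds as stated.

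Finally, for the representation $\langle u_\psi,\varphi\rangle=\int_{\mR^n}k(y)\varphi(y)\,dy$, I would observe that the decay $|k(z)|\le C|z|^{-(n+1)}$ is integrable at infinity (because $n+1>n$) and that $k$ is bounded near the origin, so $k\in L^1(\mR^n)$. Since $k$ is the inverse Fourier transform of the $L^1$-function $\psi$, Fourier inversion shows that the regular tempered distribution defined by $k$ has Fourier transform $\psi$, so it coincides with $u_\psi$ by uniqueness.
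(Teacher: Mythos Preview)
Your proof is correct and is the standard argument for this classical fact. Note that the paper does not actually prove this lemma: it is stated as ``a well-known result of Fourier analysis in $\mR^n$'' and used as a starting point for the dyadic decomposition that follows, so there is no proof in the paper to compare against.
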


In order to obtain similar results for functions
$\psi\in\Cinfty{\mR^n\setminus\{0\}}$ which are not
differentiable in $\xi=0$, we consider dyadic decompositions
on the Fourier transform side. These ideas originated in the
work of Bernstein, Littlewood, and Paley. Up to minor
modifications, the considerations leading to the proof of
Proposition~\ref{cFm_prop:existence_of_k} can be found
on~pp.~241--246 of Stein~\cite{Stein1}.\\\\
Let $\eta\ge 0$ be in $\Ccinfty{\mR^n}$ such that
$\eta(\xi)=1$ for $|\xi|\le 1$ and $\eta(\xi)=0$ for
$|\xi|\ge 2$. Define $\delta(\xi)=\eta(\xi)-\eta(2\xi)$
so that $\supp(\delta)$ is contained in the spherical shell
$R=R(1/2,2)=\overline{B}(0,2)\setminus B(0,1/2)$. Further
we set $\delta_j(\xi)=\delta(2^{-j}\xi)$ for $j\in\mZ$ so
that $\supp(\delta_j)\subset R_j=2^jR$. For $\xi\neq 0$ we
observe that
\[\sum_{j=-l}^l\delta_j(\xi)%
=\sum_{j=-l}^l\left(\eta(2^{-j}\xi)-\eta(2^{-(j-1)}\xi)\right)
=\eta(2^{-l}\xi)-\eta(2^{l+1}\xi)\to 1\]
for $l\to+\infty$. Furthermore the series
$\sum_{j\in\mZ}\delta_j$ converges to $1$ in the sense of
tempered distributions because it is uniformly bounded by~$1$
and converges pointwise (it is locally finite on
$\mR^n\setminus\{0\}\,$).\\\\
In order to prepare the proof of Proposition~%
\ref{cFm_prop:existence_of_k} we
\ifthenelse{\boolean{details}}{verify}{state} estimates for
the cutoffs $\psi_j=\psi\delta_j$ of $\psi$. Note that
$\psi=\sum_{j\in\mZ}\psi_j$ converges in the sense of
tempered distributions. \ifthenelse{\not\boolean{details}}
{We omit proofs here.}

\begin{lem}
Let $r\ge 0$ and $\psi\in\Cinfty{\mR^n\setminus\{0\}}$
such that for every multi-index $\gamma$ there exists some
constant $A_{\gamma}>0$ such that
\[|(D_{\xi}^{\gamma}\psi)\,(\xi)|\le A_{\gamma}\;
|\xi|^{r-|\gamma|}\]
for all $\xi\neq 0$. Then there exist $A'_{\gamma}>0$ such
that $|(D_{\xi}^{\gamma}\psi_j)\,(\xi)| \le
A'_{\gamma}\;|\xi|^{r-|\gamma|}$ for $\xi\neq 0$. The new
constants $A'_\gamma$ depend on $|D_\xi^\nu\delta|_\infty$
and $A_{\nu}$ for $\nu\le\gamma$, but not on~$j$.
\end{lem}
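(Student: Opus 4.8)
The plan is to expand $D_\xi^\gamma\psi_j=D_\xi^\gamma(\psi\,\delta_j)$ by the Leibniz rule and to estimate the resulting terms one by one, exploiting that $\delta_j$ is supported on the shell $R_j$, where the scale $2^{-j}$ is comparable to $|\xi|^{-1}$.

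First I would write, by the Leibniz rule for multi-indices,
\[
D_\xi^\gamma\psi_j(\xi)=\sum_{\nu\le\gamma}\binom{\gamma}{\nu}
(D_\xi^\nu\psi)(\xi)\,(D_\xi^{\gamma-\nu}\delta_j)(\xi)\,.
\]
For the factors coming from $\delta_j$, the chain rule applied to $\delta_j(\xi)=\delta(2^{-j}\xi)$ gives $D_\xi^{\gamma-\nu}\delta_j(\xi)=2^{-j|\gamma-\nu|}(D^{\gamma-\nu}\delta)(2^{-j}\xi)$, so that
\[
|(D_\xi^{\gamma-\nu}\delta_j)(\xi)|\le 2^{-j|\gamma-\nu|}\,|D^{\gamma-\nu}\delta|_\infty\,.
\]

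The key step is the scaling observation that converts the $j$-dependent factor $2^{-j|\gamma-\nu|}$ into a homogeneous one. Since $\supp(\delta_j)\subset R_j=2^jR$, any $\xi$ for which the summand is nonzero satisfies $2^{-j}\xi\in R=R(1/2,2)$, i.e.\ $2^{j-1}\le|\xi|\le 2^{j+1}$. In particular $2^{-j}\le 2\,|\xi|^{-1}$ there, whence
\[
|(D_\xi^{\gamma-\nu}\delta_j)(\xi)|\le 2^{|\gamma-\nu|}\,
|D^{\gamma-\nu}\delta|_\infty\,|\xi|^{-|\gamma-\nu|}\,,
\]
an inequality that holds trivially off $\supp(\delta_j)$, where its left-hand side vanishes.

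Combining this with the hypothesis $|(D_\xi^\nu\psi)(\xi)|\le A_\nu|\xi|^{r-|\nu|}$ and using $|\nu|+|\gamma-\nu|=|\gamma|$ for $\nu\le\gamma$, every summand is bounded by a constant multiple of $|\xi|^{r-|\gamma|}$; summing over $\nu$ yields the assertion with
\[
A'_\gamma=\sum_{\nu\le\gamma}\binom{\gamma}{\nu}\,2^{|\gamma-\nu|}\,
|D^{\gamma-\nu}\delta|_\infty\,A_\nu\,,
\]
which depends only on $|D^{\nu}\delta|_\infty$ and $A_\nu$ for $\nu\le\gamma$ and not on $j$. There is no serious obstacle in this argument; the only point requiring attention is precisely the comparability of $2^{-j}$ and $|\xi|^{-1}$ on $\supp(\delta_j)$, which is what makes the bounds uniform in $j$ and homogeneous of the correct degree $r-|\gamma|$.
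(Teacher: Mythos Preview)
Your proof is correct and follows essentially the same route as the paper: Leibniz rule, the scaling identity $D^{\gamma-\nu}\delta_j(\xi)=2^{-j|\gamma-\nu|}(D^{\gamma-\nu}\delta)(2^{-j}\xi)$, and the observation $2^{-j}\le 2|\xi|^{-1}$ on $\supp(\delta_j)\subset R_j$, leading to the very same explicit constant $A'_\gamma$.
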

\ifthenelse{\boolean{details}}{
\begin{proof}
First we observe that $(\partial_\xi^\nu\delta_j)(\xi)=
2^{-j|\nu|}\,(\partial_\xi^\nu\delta)(2^{-j}\xi)$ implies
$|D_\xi^\nu\delta_j|_{\infty}\le
2^{-j|\nu|}|D_\xi^\nu\delta|_\infty$. Note that
$\supp(\psi_j) \subset R_j$. If $\xi\in R_j$, then
$|\xi|\ge 2^{j-1}$ so that $2^{-j|\nu|}\le
2^{|\nu|}\,|\xi|^{-|\nu|}$. Furthermore the Leibniz
rule yields $D_\xi^{\gamma}\psi_j=\sum_{\nu\le\gamma}
{\gamma\choose\nu}(D_\xi^\nu\psi)(D_\xi^{\gamma-\nu}\delta_j)$.
Now we get
\begin{align*}
|(D_\xi^\gamma\psi_j)(\xi)|\; &\le\;\sum\limits_{\nu\le\gamma}
\;{\gamma\choose\nu}\;|(D_\xi^\nu\psi)(\xi)|\;
|(D_\xi^{\gamma-\nu}\delta_j)(\xi)|\\
& \le\;\sum\limits_{\nu\le\gamma}\;{\gamma\choose\nu}\;A_\nu\;
|\xi|^{r-|\nu|}\;2^{-j|\gamma-\nu|}\;
|D_\xi^{\gamma-\nu}\delta|_\infty\\
& \le\;\left(\;\sum\limits_{\nu\le\gamma}{\gamma\choose\nu}\;
A_{\nu}\;2^{|\gamma-\nu|}\;|D_\xi^{\gamma-\nu}\delta|_\infty
\;\right)\;|\xi|^{r-|\gamma|}
\end{align*}
which gives the definition of $A'_{\gamma}$.\qed
\end{proof}
}

Furthermore we have

\begin{lem}\label{cFm_lem:estimates_for_cutoffs}
Assume that $|(D_\xi^\gamma\psi_j)(\xi)|\le A'_\gamma\;
|\xi|^{r-|\gamma|}$ for $\xi\neq 0$. Then it follows
\[|D_\xi^\gamma(\xi^\beta\psi_j)(\xi)|\le A''_\gamma\;
|\xi|^{r+|\beta|-|\gamma|}\]
for $\xi\neq 0$ where $A''_\gamma$ depends on $A'_\nu$
for $\nu\le\gamma$.
\end{lem}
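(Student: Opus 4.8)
The plan is to expand the derivative of the product $\xi^\beta\psi_j$ by the Leibniz rule and to exploit a cancellation of exponents between the two factors. Writing
\[D_\xi^\gamma(\xi^\beta\psi_j)=\sum_{\nu\le\gamma}\binom{\gamma}{\nu}\,(D_\xi^\nu\xi^\beta)\,(D_\xi^{\gamma-\nu}\psi_j)\;,\]
I would treat the two factors separately. The monomial $\xi^\beta$ is differentiated explicitly: $D_\xi^\nu\xi^\beta$ vanishes unless $\nu\le\beta$ componentwise, in which case it equals $\tfrac{\beta!}{(\beta-\nu)!}\,\xi^{\beta-\nu}$, so that $|D_\xi^\nu\xi^\beta|\le\tfrac{\beta!}{(\beta-\nu)!}\,|\xi|^{|\beta|-|\nu|}$. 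For the second factor I would invoke the hypothesis $|D_\xi^{\gamma-\nu}\psi_j(\xi)|\le A'_{\gamma-\nu}\,|\xi|^{r-|\gamma-\nu|}$, using that $|\gamma-\nu|=|\gamma|-|\nu|$ because $\nu\le\gamma$.

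Multiplying the two bounds, the key point is that the powers of $|\xi|$ recombine to a single homogeneity: the $\nu$ derivatives falling on the monomial lower its degree by $|\nu|$, while the complementary $\gamma-\nu$ derivatives falling on $\psi_j$ raise the power of $|\xi|$ by the same $|\nu|$, since $r-|\gamma-\nu|=r-|\gamma|+|\nu|$. Hence every surviving summand is bounded by a constant times
\[|\xi|^{(|\beta|-|\nu|)+(r-|\gamma|+|\nu|)}=|\xi|^{r+|\beta|-|\gamma|}\;,\]
independently of $\nu$. Summing over $\nu\le\gamma$, where only the indices with $\nu\le\beta$ contribute, then yields the asserted estimate with
\[A''_\gamma=\sum_{\nu\le\gamma}\binom{\gamma}{\nu}\,\frac{\beta!}{(\beta-\nu)!}\,A'_{\gamma-\nu}\;,\]
which manifestly depends only on the constants $A'_\mu$ with $\mu=\gamma-\nu\le\gamma$ (and on $\beta$, which is fixed throughout).

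There is no genuine obstacle here; the entire content is the exponent cancellation described above, and the only point demanding care is the multi-index bookkeeping — namely remembering that $D_\xi^\nu\xi^\beta$ contributes only for $\nu\le\beta$ and that each coordinate satisfies $|\xi_i|\le|\xi|$, so that $|\xi^{\beta-\nu}|\le|\xi|^{|\beta|-|\nu|}$. This mirrors the estimate already carried out for $\psi_j=\psi\delta_j$ in the preceding lemma, and no new analytic input is required.
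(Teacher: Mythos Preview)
Your argument is correct and is essentially identical to the paper's own proof: apply the Leibniz rule, bound $|D_\xi^\nu\xi^\beta|$ by a constant times $|\xi|^{|\beta|-|\nu|}$ (nonzero only for $\nu\le\beta$), invoke the hypothesis on $D_\xi^{\gamma-\nu}\psi_j$, and observe that the exponents recombine to $r+|\beta|-|\gamma|$ independently of~$\nu$. The only cosmetic difference is that the paper records the constant as $A''_\gamma=\sum_{\nu\le\gamma}\binom{\gamma}{\nu}A'_{\gamma-\nu}$, absorbing the factorial $\beta!/(\beta-\nu)!$ that you carry explicitly; in your displayed formula for $A''_\gamma$ the summation should really run over $\nu\le\min\{\gamma,\beta\}$, as you note in words.
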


\ifthenelse{\boolean{details}}{
\begin{proof}
The Leibniz rule implies
\begin{align*}
|D_\xi^\gamma(\xi^\beta\psi_j)(\xi)|\;&\le\;
\sum\limits_{\nu\le\min\{\beta,\gamma\}}\;
{\gamma\choose\nu}\;|\xi|^{|\beta|-|\nu|}\;
|(D_\xi^{\gamma-\nu}\psi_j)(\xi)|\\
&\le\;\left(\;\sum\limits_{\nu\le\gamma}\;
{\gamma\choose\nu}\;A'_{\gamma-\nu} \;\right)\;
|\xi|^{r+|\beta|-|\gamma|}
\end{align*}
which proves the existence of $A''_{\gamma}$.\qed
\end{proof}
}

Proposition \ref{cFm_prop:existence_of_k} relies on the
following two estimates involving the geometric series.
\begin{lem}\label{cFm_lem:estimate_geometric_series}
If $m,x>0$ are real, then
\[\sum\limits_{2^j\le x^{-1}}2^{jm}\;\le\;2x^{-m}\quad
\text{and}\quad\sum\limits_{2^j>x^{-1}}2^{-jm}\;\le\;2x^m\]
where $j\in\mZ$.
\end{lem}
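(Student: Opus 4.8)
The plan is to treat each of the two sums as a geometric series dominated by its single largest term. Since the ratio $2^{-m}$ lies strictly between $0$ and $1$ for $m>0$, both series converge, and the whole estimate reduces to three elementary steps: (a) isolate the extremal index in the range of summation, (b) bound the corresponding leading term by the appropriate power of $x$ using the summation constraint, and (c) bound the remaining geometric factor $(1-2^{-m})^{-1}$ by the constant~$2$.

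For the first sum I would let $j_0\in\mZ$ be the largest integer with $2^{j_0}\le x^{-1}$; it exists because $2^j\to 0$ as $j\to-\infty$ and $2^j\to\infty$ as $j\to+\infty$. Reindexing the sum over $j\le j_0$ by $i=j_0-j\ge 0$ gives
\[\sum_{2^j\le x^{-1}}2^{jm}=2^{j_0 m}\sum_{i\ge 0}2^{-im}=\frac{2^{j_0 m}}{1-2^{-m}}.\]
The defining inequality $2^{j_0}\le x^{-1}$ yields $2^{j_0 m}\le x^{-m}$, so the first sum is at most $(1-2^{-m})^{-1}x^{-m}$.

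The second sum is handled symmetrically. I would let $j_1\in\mZ$ be the smallest integer with $2^{j_1}>x^{-1}$ (in fact $j_1=j_0+1$, so the two ranges partition $\mZ$), reindex the sum over $j\ge j_1$ by $i=j-j_1\ge 0$, and obtain
\[\sum_{2^j> x^{-1}}2^{-jm}=2^{-j_1 m}\sum_{i\ge 0}2^{-im}=\frac{2^{-j_1 m}}{1-2^{-m}}.\]
Here $2^{j_1}>x^{-1}$ gives $2^{-j_1 m}<x^{m}$, so the second sum is at most $(1-2^{-m})^{-1}x^{m}$.

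The only genuinely delicate point — and the one step worth isolating as the crux — is the passage from the factor $(1-2^{-m})^{-1}$ to the stated constant~$2$. This rests on the bound $2^{-m}\le\tfrac12$ on the geometric ratio, which forces the geometric sum below $2$ and thereby yields both claimed inequalities. For the exponents $m$ occurring in the kernel estimates feeding into Proposition~\ref{cFm_prop:existence_of_k} this is precisely the regime at hand; if instead one wished to record a form uniform in all $m>0$, the sharp constant is $(1-2^{-m})^{-1}$ rather than~$2$.
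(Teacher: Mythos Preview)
Your argument is correct and essentially identical to the paper's: both isolate the extremal index in the summation range, sum the resulting geometric series $\sum_{i\ge 0}2^{-im}=(1-2^{-m})^{-1}$, and bound the leading term by the appropriate power of~$x$. You are in fact more careful than the paper in flagging that the passage from $(1-2^{-m})^{-1}$ to the constant~$2$ requires $m\ge 1$; the paper's own proof silently assumes this, and as you observe the exponents arising in Proposition~\ref{cFm_prop:existence_of_k} can always be taken in that regime.
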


\ifthenelse{\boolean{details}}{
\begin{proof}
First of all $2^j\le x^{-1}$ iff $j\log 2\le -\log x$ iff
$j\le -\log x/\log 2$. Let $a=\log x/\log2$ and
$[a]=\min\{k\in\mN:a\le k\}$. Now it follows
\[\sum\limits_{2^j\le x^{-1}}2^{jm}\;=\;
\sum\limits_{j\le -a}2^{-jm}\;=\;
\sum\limits_{j=[a]}^{+\infty}2^{-jm}\;=\;
2^{-m[a]}\sum\limits_{j=0}^{+\infty}2^{-jm}\;\le\;2x^{-m}\]
because $a\le[a]$ implies $2^{-m[a]}\le 2^{-ma}=
\exp(-ma\log 2)=x^{-m}$. Similarly for the second estimate:
Clearly $2^j>x^{-1}$ iff $j>a=-\log x/\log 2$. Let
$[a]=\min\{k\in\mN:a<k\}$. Then we obtain
\[\sum\limits_{2^j>x^{-1}}2^{-jm}\;=\;\sum\limits_{j=[a]}
^{+\infty}2^{-jm}\;=\;2^{-m[a]}\sum\limits_{j=0}^{+\infty}
2^{-jm}\;\le\;2x^m\]
because $a<[a]$ implies $2^{-m[a]}\le 2^{-ma}=
\exp(-ma\log 2)=x^m$.\qed
\end{proof}
}

Now we are able to establish the validity of

\begin{prop}\label{cFm_prop:existence_of_k}
Let $r\ge 0$ and $\psi\in\Cinfty{\mR^n\setminus\{0\}}$
such that $|(D_\xi^\gamma\psi)(\xi)|\le A_\gamma\;%
|\xi|^{r-|\gamma|}$ for all $\xi\neq 0$. Since $\psi$
defines a tempered distribution, there exists a
distribution $u_\psi\in\mcS'(\mR^n)$ such that
$\widehat{u_\psi}=\psi$. Now it follows that there
is a function $k\in\Cinfty{\mR^n\setminus\{0\}}$
such that
\begin{equation}\label{cFm_equ:estimates_for_k}
|(D_z^\beta k)(z)|\le C_\beta\;|z|^{-(n+r+|\beta|)}
\end{equation}
for all $z\neq 0$ and
\begin{equation}\label{cFm_equ:k_defines_u}
\langle\,u_\psi\,,\,\varphi\,\rangle
=\int_{\mR^n}k(z)\varphi(z)\,dz
\end{equation}
for all $\varphi\in\mcS(\mR^n)$ such that $\supp(\varphi)
\subset\mR\setminus\{0\}$. Furthermore $u_\psi$ has
finite order $\le\min\{q\in\mN:n/2+r<q\}$.
\end{prop}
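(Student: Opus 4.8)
The plan is to construct $k$ as the sum of the inverse Fourier transforms of the cutoffs $\psi_j=\psi\,\delta_j$ and to control this sum by the dyadic estimates already assembled in Lemma~\ref{cFm_lem:estimates_for_cutoffs} and Lemma~\ref{cFm_lem:estimate_geometric_series}. First I would let $k_j$ denote the inverse Fourier transform of $\psi_j$; since $\psi_j\in\Ccinfty{\mR^n}$ is supported in the shell $R_j=2^jR$, each $k_j$ is a Schwartz function, and $D_z^\beta k_j$ is the inverse transform of $(i\xi)^\beta\psi_j$. Writing $z^\alpha e^{i\langle\xi,z\rangle}=(-i\,D_\xi)^\alpha e^{i\langle\xi,z\rangle}$ and integrating by parts $|\alpha|$ times gives
\[
|z^\alpha D_z^\beta k_j(z)|\le(2\pi)^{-n}\int_{R_j}|D_\xi^\alpha(\xi^\beta\psi_j)(\xi)|\,d\xi .
\]
On $R_j$ one has $|\xi|\asymp 2^j$ and $\vol(R_j)\asymp 2^{jn}$, so Lemma~\ref{cFm_lem:estimates_for_cutoffs} turns this into $|z^\alpha D_z^\beta k_j(z)|\le C_{\alpha,\beta}\,2^{j(n+r+|\beta|-|\alpha|)}$. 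Specializing to $\alpha=0$, and for each integer $m\ge 1$ to $|\alpha|=m$ (using $|z|^m\lesssim\sum_{|\alpha|=m}|z^\alpha|$), I obtain the two complementary bounds
\[
|D_z^\beta k_j(z)|\le C_\beta\,2^{j(n+r+|\beta|)}\quad\text{and}\quad|D_z^\beta k_j(z)|\le C_{m,\beta}\,|z|^{-m}\,2^{j(n+r+|\beta|-m)} .
\]

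Next I would set $k=\sum_{j\in\mZ}k_j$ and estimate $D_z^\beta k$ by splitting the series at the dyadic threshold $2^j\asymp|z|^{-1}$. For $2^j\le|z|^{-1}$ the first bound together with the first estimate of Lemma~\ref{cFm_lem:estimate_geometric_series} (applied with exponent $n+r+|\beta|$) gives a contribution $\lesssim|z|^{-(n+r+|\beta|)}$; for $2^j>|z|^{-1}$ the second bound with any fixed $m>n+r+|\beta|$ together with the second estimate of Lemma~\ref{cFm_lem:estimate_geometric_series} gives the same bound. Adding the two halves yields precisely (\ref{cFm_equ:estimates_for_k}), and the identical splitting shows that $\sum_j D_z^\beta k_j$ converges absolutely and uniformly on every annulus $\{0<\epsilon\le|z|\le R\}$, so $k\in\Cinfty{\mR^n\setminus\{0\}}$ with $D_z^\beta k=\sum_j D_z^\beta k_j$. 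I expect this balancing to be the main obstacle: the whole argument hinges on choosing the splitting point and the auxiliary order $m$ so that both tails are summable and recombine to the sharp power. Once the per-piece estimates and the geometric-series bounds are in hand, the rest is bookkeeping.

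For the integral representation (\ref{cFm_equ:k_defines_u}) I would use that $\psi=\sum_j\psi_j$ in $\mcS'(\mR^n)$, so applying the inverse Fourier transform gives $u_\psi=\sum_j k_j$ in $\mcS'(\mR^n)$. If $\varphi\in\mcS(\mR^n)$ has $\supp(\varphi)\subset\{|z|\ge\epsilon\}$, then the first bound (for $j\to-\infty$, where $n+r>0$) and the second bound with some $m>n+r$ (for $j\to+\infty$) show that $\sum_j\int|k_j\varphi|<\infty$; Fubini then permits interchanging summation and integration and yields $\langle u_\psi,\varphi\rangle=\int k\varphi$. Note that $|k(z)|\lesssim|z|^{-(n+r)}$ is not locally integrable at the origin for $r\ge 0$, which is exactly why the representation can only be asserted for $\varphi$ supported away from $0$.

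Finally, for the order I would argue through Plancherel rather than crudely through $L^1$ bounds, since this is what produces the constant $n/2$ instead of $n$. Writing $\langle u_\psi,\varphi\rangle=\int_{\mR^n}\psi(\xi)\,\check\varphi(\xi)\,d\xi$ with $\check\varphi$ the inverse transform, the hypothesis $|\psi(\xi)|\le A_0|\xi|^r$ and Cauchy--Schwarz give
\[
|\langle u_\psi,\varphi\rangle|\le A_0\Bigl(\int_{\mR^n}|\xi|^{2r}(1+|\xi|^2)^{-q}\,d\xi\Bigr)^{1/2}\bigl\|(1+|\xi|^2)^{q/2}\check\varphi\bigr\|_2 .
\]
The first factor is finite exactly when $q>n/2+r$; by Plancherel the second factor is a constant times the Sobolev norm $\|\varphi\|_{H^q}$, which for integer $q$ is dominated by $\sum_{|\alpha|\le q}\|D^\alpha\varphi\|_2$, and for $\varphi$ supported in a fixed compact set by $\sum_{|\alpha|\le q}\|D^\alpha\varphi\|_\infty$. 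Choosing $q=\min\{q\in\mN:n/2+r<q\}$ then exhibits $u_\psi$ as a distribution of order $\le q$.
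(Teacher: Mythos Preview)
Your proposal is correct and follows essentially the same route as the paper: define $k_j=\psi_j^{\#}$, derive $|z^\gamma D_z^\beta k_j(z)|\lesssim 2^{j(n+r+|\beta|-|\gamma|)}$ from Lemma~\ref{cFm_lem:estimates_for_cutoffs} and the volume bound for $R_j$, then split the sum at $2^j\asymp|z|^{-1}$ using $M=0$ for the low part and any $M>n+r+|\beta|$ for the high part together with Lemma~\ref{cFm_lem:estimate_geometric_series}. The paper merely states that the order bound ``follows by means of the Plancherel theorem,'' and your Cauchy--Schwarz/Sobolev expansion is precisely the argument one has in mind there.
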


\begin{proof}
Let
\[k_j(z)=\psi_j^{\#}(z)=(2\pi)^{-n}\int_{\mR^n}\psi_j(\xi)
e^{i\xi z}d\xi\]
be the inverse Fourier transform of $\psi_j$. Since
$\psi=\sum_{j\in\mZ}\,\psi_j$, it follows that
$u=\sum_{j\in\mZ}\,k_j$ is convergent in the sense of
tempered distributions because  Fourier transformation is
continuous \wrt the topology of $\mcS'(\mR^n)$. We shall
estimate $\sum_{j\in\mZ}\,|(D_z^\beta k_j)(z)|$  for
$z\neq 0\,$: Since $2^{j-1}\le|\xi|\le 2^{j+1}$ for
$\xi\in R_j$, Lemma~\ref{cFm_lem:estimates_for_cutoffs}
implies
\[|D_\xi^\gamma(\xi^\beta\psi_j)\,(\xi)|\le
A''_\gamma\;|\xi|^{r+|\beta|-|\gamma|}\le
A''_\gamma\;c_{\beta,\gamma}\;2^{j(r+|\beta|-|\gamma|)}\]
where $c_{\beta,\gamma}=\max\{2^{r+|\beta|},2^{|\gamma|}\}$.
Consequently
\begin{align*}
|z^\gamma(D_z^\beta k_j)\,(z)| &=
|(D_\xi^\gamma(\xi^\beta\psi_j))^{\#}\;(z)|
\le (2\pi)^{-n}\;|D_\xi^\gamma(\xi^\beta\psi_j)|_1\\
&\le (2\pi)^{-n}\;|D_\xi^\gamma(\xi^\beta\psi_j)|_{\infty}
\;\vol(R_j)\\
&\le(2\pi)^{-n}\;A''_\gamma\;c_{\beta,\gamma}\;\vol(R)\;
2^{j(n+r+|\beta|-|\gamma|)}
\end{align*}
where $\vol(R_j)=2^{jn}\vol(R)$ denotes the Lebesgue measure
of the shell $R_j$. The validity of this inequality for all
$|\gamma|=M$ shows that there exists some $C_{\beta,M}>0$
such that
\[|(D_z^\beta k_j)(z)|\le C_{\beta,M}\;|z|^{-M}\;
2^{j(n+r+|\beta|-M)}\]
for all $z\neq 0$. Putting $M=0$ it follows by
Lemma~\ref{cFm_lem:estimate_geometric_series} that
\[\sum\limits_{2^j\le|z|^{-1}}\;|(D_z^\beta k_j)(z)|\le
2 C_{\beta,0}\;|z|^{-(n+r+|\beta|)}\;,\]
and for $M>n+r+|\beta|$ we get
\[\sum\limits_{2^j>|z|^{-1}}\;|(D_z^\beta k_j)(z)|\le
C_{\beta,M}\;|z|^{-M} \sum\limits_{2^j>|z|^{-1}}
2^{j(n+r+|\beta|-M)}\le 2 C_{\beta,M}\;
|z|^{-(n+r+|\beta|)}\;.\]
These estimates show that $\sum_{j\in\mZ}|D_z^\beta k_j|$
is uniformly convergent on compact subsets of~%
$\mR^n\setminus\{0\}$ so that $k=\sum_{j\in\mZ}k_j$
defines a smooth function on $\mR^n\setminus\{0\}$
which satisfies~\eqref{cFm_equ:estimates_for_k} and~%
\eqref{cFm_equ:k_defines_u}. The latter assertion of
this proposition follows by means of the Plancherel
theorem.\qed
\end{proof}
In view of later applications we consider functions of
the form
\[\psi(\xi)=\xi^\beta\;\left(\,1+|\xi|^2\,\right)^{-q}\;
|\xi|^r\;\log^s|\xi|\]
for $\xi\neq 0$ where $s$ is an integer, $\beta\in\mN^n$,
and $r,q$ are real. Here $|\xi|$ denotes the Euclidean
norm of $\xi\in\mR^n$. It follows by induction that its
derivatives $D_\xi^\alpha\psi$ are $\mC$-linear combinations
of functions of the form $\xi\mapsto\xi^{\beta'}
(1+|\xi|^2)^{-q'}|\xi|^{r'}\log^{s'}|\xi|$ where
$s',\beta',q',r'$ are as above and such that
$|\beta'|-2q'+r'=|\beta|-2q+r-|\alpha|$ and
$|\beta'|+r'\ge |\beta|+r-|\alpha|$. Assume that
$2q>|\beta|+r>0$ and choose $0<\epsilon<|\beta|+r$. Now
it is easy to see that there exists some $A_\alpha>0$ such
that $|\,(\text{summand of}\;D^\alpha_\xi\psi)\,(\xi)\,%
|\le A_\alpha|\xi|^{\epsilon-\alpha}$ for $\xi\neq 0$ so
that $\psi$ meets the assumptions of Proposition~%
\ref{cFm_prop:existence_of_k}.\\\\
For (spherically symmetric) functions $\psi$ of this kind
Gelfand and Shilov computed the tempered distribution~%
$T_\psi$ explicitly using methods of complex analysis
(Cauchy's theorem and analytic continuation), see
Section~3.3 of Chapter~II of~\cite{GelShi}.\\\\
Recall that a tempered distribution $u$ which satisfies
equation~(\ref{cFm_equ:k_defines_u}) of the preceding
proposition for all $\supp(\varphi)\subset\mR^n%
\setminus\{0\}$ is almost uniquely determined: Any
difference of two such distributions has support~$\{0\}$
and is thus a linear combination of derivatives of the
Dirac delta distribution.\\\\
Conversely, assume that $k\in\mcC(\mR^n\setminus\{0\})$
has an algebraic singularity of order~$\le m$ in~$0$ (here
we choose $m\ge 0$ to be the minimal integer such that
$z\mapsto|z|^m\,|k(z)|$ is bounded in a neighborhood
of~$0$), and that $k$ has decay of order $n+r$ at infinity
(there exists some $C>0$ such that $|k(z)|\le C|z|^{-(n+r)}$
for all $|z|\ge 1$). In particular $k\in L^1(\mR^n)$ and
for $0<r_0<r$ the function $z\mapsto |z|^{n+r_0}\,|k(z)|$
vanishes at infinity. By regularization of the divergent
integral $\int_{\mR^n}k(y)\varphi(y)\,dy$ we can now
define a tempered distribution $u$ on $\mR^n$: Let
$B=\overline{B}(0,1)$ be the closed ball of radius~1
around~0. For $m\ge 0$ and $\varphi\in\mcS(\mR^n)$ let
\[(P_{m-1}\,\varphi)(y)=\sum\limits_{|\nu|\le m-1}
\frac{1}{\nu!}\,(\partial^\nu\varphi)(0)\,y^\nu\]
denote its Taylor polynomial of order $m-1$ in $0$, and
\[(R_m\varphi)(y)=\varphi(y)-(P_{m-1}\,\varphi)(y)=
\sum\limits_{|\nu|=m}\frac{1}{\nu!}\,(\partial^\nu\varphi)
(\vartheta y)\,y^\nu\]
the remainder term, where $0\le\vartheta\le 1$ is chosen
suitably depending on~$y$. Clearly
\[\langle u,\varphi\rangle=\int_B k(y)\,(R_m\varphi)(y)\;dy+
\int_{\mR^n\setminus B}k(y)\varphi(y)\;dy\]
defines a tempered distribution $u$ satisfying equation~%
(\ref{cFm_equ:k_defines_u}) of Proposition~\ref{cFm_prop:%
existence_of_k}. Here we use the estimate
\[|(R_m\varphi)(y)|\,\le\,\left(\sum\limits_{|\nu|=m}\,%
\frac{1}{\nu!}\; \sup\{|(D_w^\nu\varphi)(w)|:|w|\le|y|\}%
\right)\;|y|^m\;.\]
Observe that $u$ has order $\le m$ and that $u=u_1+u_2$
is a sum of a distribution $u_1\in\mcE'(\mR^n)$ of
compact support and a distribution $u_2$ given by a
continuous function $k\in\mcC(\mR^n)$ such that
$|k(z)|\le C|z|^{-(n+r)}$ for $|z|\ge 1$, for some
$r>0$. To see this we define $u_1=\chi u$ and
$u_2=(1-\chi)u$ with $\chi\in\Ccinfty{\mR^n}$ such
that $0\le\chi\le 1$, $\chi(z)=1$ for $|z|\le 1$,
and $\chi(z)=0$ for $|z|\ge 2$.\\\\
If $u\in\mcD'(\mR^n)$ is given by a function
$k\in L^p(\mR^n)$ for some $1\le p\le\infty$, or if
$u\in\mcE'(\mR^n)$, then for $r_0>0$ the distribution~$u$
extends to a continuous linear functional on the
Fr\'echet space $\mcQ(\mR^n,r_0)$ of all functions
$a\in\Ccinfty{\mR^n}$ such that
$z\mapsto|z|^{n+r_0}\,|D_z^\beta a(z)|$ vanishes at
infinity for all multi-indices $\beta$. Here the
topology of $\mcQ(\mR^n,r_0)$ is defined by the semi-norms
\[N_m(a)=\sum_{|\beta|\le m} \left|\,(1+|z|)^{n+r_0}
\,(D_z^\beta a)\,\right|_{\infty}\]
for $m\ge 0$. Note that $\mcQ(\mR^n,r_0)$ contains
$\Ccinfty{\mR^n}$ as a dense subspace and is invariant under
differentiation, translation $(\tau_za)(y)=a(y-z)$, and
reflection $\tilde{a}(y)=a(-y)$. Any $u\in\mcQ'(\mR^n,r_0)$
defines a smooth function
\[(u\ast a)(z)=\langle\,u\,,\,\tau_z\tilde{a}\,\rangle\]
on $\mR^n$ with $D_z^\beta(u\ast a)=u\ast(D_z^\beta a)$.

\begin{lem}\label{cFm_lem:convolution_with_u}
Let $r_0>0$ and $a\in\Cinfty{\mR^n}$ such that
$z\mapsto |z|^{n+r_0}\,(D_z^\beta a)(z)$ vanishes at
infinity for all multi-indices $\beta$. Assume that either
\begin{enumerateroman}
\item $u$ is given by a function $k\in\mcC(\mR^n)$ such that
$z\mapsto |z|^{n+r_0}\,|k(z)|$ vanishes at infinity,\\
\item or that $u\in\mcE'(\mR^n)$ has finite order $m$.
\end{enumerateroman}
Then $u\ast a$ is a smooth function such that $z\mapsto
|z|^{n+r_0}\,D_z^\beta(u\ast a)(z)$ vanishes at infinity
for all $\beta$. Furthermore $\widehat{u}=\psi$ is a
continuous function of polynomial growth and
$(u\ast a)\widehat{\;}\,(\xi)=\psi(\xi)\,\widehat{a}(\xi)$.
\end{lem}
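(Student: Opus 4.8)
The plan is to verify the two assertions of the lemma---the decay of $u\ast a$ and the Fourier relation---separately, running cases \textit{(i)} and \textit{(ii)} in parallel. Since it is already established that $D_z^\beta(u\ast a)=u\ast(D_z^\beta a)$ and since $D_z^\beta a$ satisfies exactly the same decay hypothesis as $a$, I would first reduce the decay statement to the case $\beta=0$: it suffices to prove $|z|^{n+r_0}|(u\ast a)(z)|\to 0$, and then apply this to $D_z^\beta a$ in place of $a$ for each multi-index $\beta$.

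For the decay in case \textit{(i)} I would write the convolution as $(u\ast a)(z)=\int_{\mR^n}k(y)\,a(z-y)\,dy$ and split the domain into the regions $|y|\le|z|/2$ and $|y|>|z|/2$. Setting $\epsilon_a(t)=\sup_{|w|\ge t}|w|^{n+r_0}|a(w)|$ and $\epsilon_k(t)=\sup_{|w|\ge t}|w|^{n+r_0}|k(w)|$, both of which tend to $0$ at infinity, and noting that $k,a\in L^1(\mR^n)$, one has on the first region $|z-y|\ge|z|/2$, hence $|a(z-y)|\le(|z|/2)^{-(n+r_0)}\epsilon_a(|z|/2)$, giving a contribution $\le(|z|/2)^{-(n+r_0)}\epsilon_a(|z|/2)\,|k|_1$; on the second region $|y|>|z|/2$, hence $|k(y)|\le(|z|/2)^{-(n+r_0)}\epsilon_k(|z|/2)$, giving $\le(|z|/2)^{-(n+r_0)}\epsilon_k(|z|/2)\,|a|_1$. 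Multiplying by $|z|^{n+r_0}$ and using $\epsilon_a,\epsilon_k\to0$ yields the claim. In case \textit{(ii)}, let $u$ have support in $\overline{B}(0,R)$ and order $m$, so $|(u\ast a)(z)|=|\langle u_y,a(z-y)\rangle|\le C\sum_{|\gamma|\le m}\sup_{|y|\le R}|(D^\gamma a)(z-y)|$; for $|z|\ge 2R$ one has $|z-y|\ge|z|/2$ whenever $|y|\le R$, so each term is $\le(|z|/2)^{-(n+r_0)}\,\epsilon_{D^\gamma a}(|z|/2)$, which again gives the decay after multiplying by $|z|^{n+r_0}$.

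For the Fourier relation in case \textit{(i)}, the hypothesis forces $k\in L^1(\mR^n)$, so $\psi=\widehat{u}=\widehat{k}$ is continuous and bounded by the Riemann--Lebesgue lemma, and $(u\ast a)\widehat{\;}=\widehat{k}\,\widehat{a}=\psi\,\widehat{a}$ is just the classical convolution theorem for $L^1$ functions. In case \textit{(ii)} the Paley--Wiener theorem shows that $\psi=\widehat{u}$ is the restriction to $\mR^n$ of an entire function, and is smooth and of polynomial growth (of degree governed by the order $m$).

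The main obstacle is the convolution identity in case \textit{(ii)}, where $a$ need not be Schwartz, so the tempered-distribution convolution theorem does not apply directly. I would resolve this by a density argument using the quantitative form of the estimates above: bounding $(1+|z|)^{n+r_0}$ times the split integrals (treating $|z|\le1$ by the trivial bound $|u\ast D^\beta a|\le|k|_1\,|D^\beta a|_\infty$, or its analogue via the order of $u$) shows that each seminorm $N_m(u\ast a)$ is dominated by a finite combination of seminorms $N_{m'}(a)$, so that $a\mapsto u\ast a$ is continuous on the Fr\'echet space $\mcQ(\mR^n,r_0)$, in which $\Ccinfty{\mR^n}$ is dense. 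For $a\in\Ccinfty{\mR^n}$ the identity $(u\ast a)\widehat{\;}=\psi\,\widehat{a}$ is standard. Choosing $a_\nu\in\Ccinfty{\mR^n}$ with $a_\nu\to a$ in $\mcQ(\mR^n,r_0)$, continuity of convolution gives $u\ast a_\nu\to u\ast a$ in $\mcQ(\mR^n,r_0)$, hence in $L^1$, so $(u\ast a_\nu)\widehat{\;}\to(u\ast a)\widehat{\;}$ uniformly; and $a_\nu\to a$ in $L^1$ gives $\widehat{a_\nu}\to\widehat{a}$ uniformly, whence $\psi\,\widehat{a_\nu}\to\psi\,\widehat{a}$ pointwise. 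Passing to the limit yields the identity for all $a\in\mcQ(\mR^n,r_0)$. As an alternative to density, one may justify interchanging $\langle u_y,\cdot\rangle$ with the defining integral of the Fourier transform by a Fubini-type theorem for compactly supported distributions.
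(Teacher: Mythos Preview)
Your argument is correct and follows the paper's approach closely. The only notable variation is in case~\textit{(i)}: you split the convolution integral at the moving radius $|y|=|z|/2$, whereas the paper first distributes the weight via $|z|^{n+r_0}\le 2^{n+r_0}\bigl(|y|^{n+r_0}+|z-y|^{n+r_0}\bigr)$ and then splits each resulting integral at a fixed radius $R$ depending on $\epsilon$; both are standard and equivalent in effect. Your treatment of case~\textit{(ii)} matches the paper's. For the Fourier identity the paper simply invokes the Paley--Wiener theorem (case~\textit{(ii)}) and the $L^1$ convolution theorem (case~\textit{(i)}) in one sentence, while you supply a full density argument through $\mcQ(\mR^n,r_0)$---this is extra care the paper omits, and your continuity estimate for $a\mapsto u\ast a$ is correct and in the spirit of the remarks the paper makes just before the lemma about $u$ extending continuously to $\mcQ(\mR^n,r_0)$.
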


\begin{proof}
By induction it suffices to prove that $z\mapsto|z|^{n+r_0}
\,(u\ast a)(z)$ vanishes at infinity. In the first case
$(u\ast a)(z)=\int_{\mR^n}k(y)a(z-y)\,dy$. From
$|z|\le |z-y|+|y|\le 2\max\{|z-y|,|y|\}$ we deduce
\begin{multline*}
|z|^{n+r_0}\;|(u\ast a)(z)|\le 2^{n+r_0}\;%
\int_{\mR^n}|y|^{n+r_0}\,|k(y)|\,|a(z-y)|\,dy\\
+ 2^{n+r_0}\;\int_{\mR^n}|k(y)|\,|z-y|^{n+r_0}\,%
|a(z-y)|\;dy\;.
\end{multline*}
We estimate the first integral on the right hand side.
Let $\epsilon>0$ be arbitrary. Choose $R>0$ such that
$|y|^{n+r_0}\,|a(y)|\le\epsilon$ and $|y|^{n+r_0}\,|k(y)|%
\le\epsilon$ for all $|y|\ge R$, and $C>0$ such that
$|y|^{n+r_0}\,|k(y)|\le C$ for all $y\in\mR^n$.
Let $B=B(0,R)$ be the open ball of radius $R$. For
$|z|\ge 2R$ we obtain
\[\int_B |y|^{n+r_0}\,|k(y)|\,|a(z-y)|\,dy\le \epsilon
\mdot C\mdot\int_{\mR^n\setminus B}|y|^{-(n+r_0)}\,dy\]
because $y\in B$ implies $|z-y|\ge R$. Furthermore
\[\int_{\mR^n\setminus B}|y|^{n+r_0}\,|k(y)|\,|a(z-y)|\,dy
\le \epsilon\mdot\int_{\mR^n}|a(y)|\,dy\;.\]
The second integral can be treated similarly. Thus
$|z|^{n+r_0}|(u\ast a)(z)|\le C'\mdot\epsilon$ for
$|z|\ge 2R$.\\\\
Finally we assume $u\in\mcE'(\mR^n)$ so that
$\supp(u)\subset K=\overline{B}(0,R)$ for $R>0$ large enough.
There exists a $C>0$ such that
\[|\langle u,a\rangle|\le C\sum_{|\nu|\le m}|\,
D_y^\nu a\,|_{K,\infty}\]
for all $a\in\Cinfty{\mR^n}$. Note that
$D_y^\nu(\tau_z\tilde{a})(y)=(-1)^{|\nu|}(D_y^\nu a)(z-y)$.
Since $|z|\le |z-y|+|y|\le 2|z-y|$ for $y\in K$ and
$|z|\ge 2R$, we get
\begin{align*}
|z|^{n+r_0}\,|\,\langle u,\tau_z\tilde{a}\rangle\,|\; &\le\;
C\sum\limits_{|\nu|\le m}|z|^{n+r_0}\,|\,
D_y^\nu(\tau_z\tilde{a})\,|_{K,\infty}\\
& \le\;C\,2^{n+r_0}\sum\limits_{|\nu|\le m}\sup\{\,
|z-y|^{n+r_0}\,|(D_y^\nu a)(z-y)|: y\in K\,\}
\end{align*}
which tends to $0$ for $|z|\to+\infty$. The second claim of
this lemma is a consequence of the Paley-Wiener theorem for
$u\in\mcE'(\mR^n)$, and trivial for $u=k\in L^1(\mR^n)$.\qed
\end{proof}

Let us return to the global situation on $M$ and resume the
discussion of Section~\ref{sec:central_Fourier_multipliers}.
Using the function space $\mcQ$ introduced in Definition~%
\ref{cFm_defn:subspace_Q_of_LM} we state a refinement of
Definition~\ref{cFm_defn:central_Fourier_multiplier}.

\begin{defn}\label{cFm_defn:central_multipliers_refined}
A continuous function $\psi$ on $\frakz\frakm^\ast$ is called
a central Fourier multiplier if for all $a\in\mcQ$ there
exists a (unique) function $c\in\mcQ$ such that
$\widehat{c}(x,\xi)=\psi(\xi)\,\widehat{a}(x,\xi)$ for
all $x$ and $\xi$. These functions form a subalgebra~$\mcM$
of~$\mcC(\frakz\frakm^\ast)$ containing the polynomial
functions and the Schwartz functions.
\end{defn}

If we interpret the solution $c$ as a function on $M$ rather
than on $\mR^k\times\frakz\frakm$, then its definition does
not depend on the choice of the coordinates furnished by a
coexponential basis~$\mcB$ for $\frakz\frakm$ in $\frakm$.
\ifthenelse{\boolean{details}}{
We give a short proof of this assertion: For any $a\in\LM$
we have
\[(a\circ\Phi)\,\widehat{\;}\,(x,\xi)=e^{i<\xi,\lambda(x)>}\;
(a\circ\Phi')\,\widehat{\;}\,(T_1(x),\xi)\;.\]
If $a,c\in\LM$ such that $(c\circ\Phi')\,\widehat{\;}\,%
(x',\xi)=\psi(\xi)\;(a\circ\Phi')\,\widehat{\;}\,(x',\xi)$,
then it follows immediately from the above that
$(c\circ\Phi)\,\widehat{\;}\,(x,\xi)=\psi(\xi)\;
(a\circ\Phi)\,\widehat{\;}\,(x,\xi)$ which yields
the assertion.\\\\}{\\\\}
Let us fix a direct sum decomposition $\frakz\frakm=\frakz
\oplus\tilde{\frakz}$ of the center of $\frakm$ and denote
the central variable by $(z,\tilde{z})$. This also gives a
decomposition $\frakz\frakm^\ast=\frakz^\ast\oplus
\tilde{\frakz}^\ast$ of the linear dual with variable
$(\xi,\tilde{\xi})$. Let us identify $\frakz$ with $\mR^n$.
Assume that $a\in\mcQ$ and $r_0>0$ such that
\[(x,z,\tilde{z})\mapsto |(z,\tilde{z})|^{n+r_0}\;
|\,(D_x^\alpha D_z^\beta D_{\tilde{z}}^{\tilde{\beta}}a)
(x,z,\tilde{z})\,|\]
vanishes at infinity. Clearly $a^{\sharp}(x,\tilde{z})(z)=
a(x,z,\tilde{z})$ defines a smooth function $a^{\sharp}:
\mR^k\times\tilde{\frakz}\to\mcQ(\frakz,r_0)$ with
$D_x^\alpha D_{\tilde{z}}^{\tilde{\beta}}a^{\sharp}
=(D_x^\alpha D_{\tilde{z}}^{\tilde{\beta}}a)^{\sharp}$.
Further any $u\in\mcQ'(\frakz,r_0)$ gives rise to a
smooth function
\[(u\ast a)(x,z,\tilde{z})=\langle\,u,\tau_z
(a^{\sharp}(x,\tilde{z})\tilde{\;})\,\rangle\]
on $M$ such that $D_x^\alpha D_z^\beta D_{\tilde{z}}^
{\tilde{\beta}}(u\ast a)=u\ast(D_x^\alpha D_z^\beta
D_{\tilde{z}}^{\tilde{\beta}}a)$. Here translation and
reflection affect only the variable~$z$.

\begin{lem}\label{cFm_lem:convolution_with_u_refined}
Assume that $u\in\mcD'(\frakz)$ is given by a function
$k\in\mcC(\frakz)$ such that $z\mapsto|z|^{n+r}\,|k(z)|$
vanishes at infinity for some $r>0$, or that
$u\in\mcE'(\frakz)$. It follows $u\ast a\in\mcQ$ for
all $a\in\mcQ$. In particular $\psi=\widehat{u}$ lies
in $\mcM$ when interpreted as a function on
$\frakz\frakm^\ast$.
\end{lem}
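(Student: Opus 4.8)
The plan is to treat the variables $x\in\mR^k$ and $\tilde{z}\in\tilde{\frakz}$ as spectator parameters and to reduce the whole statement to the one–variable result, Lemma~\ref{cFm_lem:convolution_with_u}, applied on $\frakz\cong\mR^n$. The construction preceding the lemma already furnishes that $u\ast a$ is smooth and that $D_x^\alpha D_z^\beta D_{\tilde{z}}^{\tilde{\beta}}(u\ast a)=u\ast(D_x^\alpha D_z^\beta D_{\tilde{z}}^{\tilde{\beta}}a)$; since $\mcQ$ is closed under differentiation and its $x$–support condition is preserved (the convolution acts only in $z$, so it does not enlarge the compact set $L$ outside of which $a$ vanishes), it suffices to produce, for each $a\in\mcQ$, one exponent $r_0'>0$ for which $(x,z,\tilde{z})\mapsto(1+|(z,\tilde{z})|)^{l+r_0'}\,(u\ast a)(x,z,\tilde{z})$ vanishes at infinity. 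Because $a\in\mcQ$ carries a \emph{single} joint decay order $l+r_0$ valid for all its derivatives, the exponent $r_0'$ produced below will depend only on $r_0$ (and, in case~(i), on $r$), hence is common to all multi-indices; applying the zeroth–order statement to $D_x^\alpha D_z^\beta D_{\tilde{z}}^{\tilde{\beta}}a\in\mcQ$ then yields $u\ast a\in\mcQ$.

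For the core decay estimate I would split the region where $|(z,\tilde{z})|$ is large into the two regimes $|\tilde{z}|\ge|z|$ and $|z|>|\tilde{z}|$, using $|(z,\tilde{z})|\le\sqrt{2}\,\max\{|z|,|\tilde{z}|\}$. In the first regime $\tilde{z}$ is a genuine spectator. Writing $b=a^{\sharp}(x,\tilde{z})\in\mcQ(\frakz,r_0)$ and using the joint decay of $a$, one bounds $|(u\ast a)(x,z,\tilde{z})|\le|k|_1\,\sup_w|a^{\sharp}(x,\tilde{z})(w)|$ in case~(i) (and the analogous finite sum of sup–norms of the $D_z^\nu a$ in case~(ii)) by a multiple of $(1+|\tilde{z}|)^{-(l+r_0)}\le C(1+|(z,\tilde{z})|)^{-(l+r_0)}$, since $(1+|\tilde{z}|)\ge c(1+|(z,\tilde{z})|)$ there. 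Thus any $r_0'<r_0$ settles the $\tilde{z}$–dominant regime for both kinds of $u$.

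The $z$–dominant regime is where the work lies. For $u\in\mcE'(\frakz)$ the difficulty evaporates: with $\supp u\subset\overline{B}(0,R)$, the value $(u\ast a)(x,z,\tilde{z})$ is a finite combination of $(D_z^\nu a)(x,z-y,\tilde{z})$ with $|y|\le R$, so for $|z|\ge 2R$ one has $|(z-y,\tilde{z})|\ge|(z,\tilde{z})|/2$ and the full joint order–$(l+r_0)$ decay of $a$ passes directly to $u\ast a$, again allowing $r_0'<r_0$. \textbf{The main obstacle is case~(i).} The kernel $k$ carries only decay of order $n+r$ adapted to $\frakz\cong\mR^n$, whereas membership in $\mcQ$ demands decay of order $l+r_0'$ in the full $l$–dimensional central variable; the deficit of $\dim\tilde{\frakz}=l-n$ orders must be recovered from the decay of $a$ in the $\tilde{z}$–directions. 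Concretely, I would apply Lemma~\ref{cFm_lem:convolution_with_u} to $b=a^{\sharp}(x,\tilde{z})$ and rerun the quantitative estimate in its proof while tracking how the constants depend on the $\mcQ(\frakz,\cdot)$–seminorms of $b$. These seminorms are bounded uniformly for $x\in L$ and, crucially, \emph{decay in $\tilde{z}$}: from the joint order–$(l+r_0)$ estimate on $a$ one gets, for instance, $|a^{\sharp}(x,\tilde{z})|_1\le C(1+|\tilde{z}|)^{-(l-n+r_0)}$ together with matching decay of the weighted sup–norms. Feeding this $\tilde{z}$–dependence through the constants of Lemma~\ref{cFm_lem:convolution_with_u} is what trades the $n$–dimensional gain of the convolution against the $l$–dimensional decay required by $\mcQ$; making this bookkeeping uniform in $x\in L$ and simultaneous with the $|z|$–power is the delicate step.

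Once $u\ast a\in\mcQ$ is established for all $a\in\mcQ$, the $\mcM$–membership of $\psi=\widehat{u}$ is formal. Taking the partial Fourier transform in the full central variable converts convolution in $z$ into multiplication in $\xi$, so that $\widehat{(u\ast a)}(x,\xi,\tilde{\xi})=\widehat{u}(\xi)\,\widehat{a}(x,\xi,\tilde{\xi})=\psi(\xi,\tilde{\xi})\,\widehat{a}(x,\xi,\tilde{\xi})$, where $\psi$ is read as the $\tilde{\xi}$–independent function $(\xi,\tilde{\xi})\mapsto\widehat{u}(\xi)$ on $\frakz\frakm^\ast$. Hence $c=u\ast a\in\mcQ$ solves the multiplier problem of Definition~\ref{cFm_defn:central_multipliers_refined}, uniqueness following from injectivity of the Fourier transform, while continuity and polynomial growth of $\psi=\widehat{u}$ are exactly the last assertion of Lemma~\ref{cFm_lem:convolution_with_u} in case~(i) and the Paley--Wiener theorem in case~(ii). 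Therefore $\psi\in\mcM$, as claimed.
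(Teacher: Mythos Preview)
Your overall plan coincides with the paper's: treat $(x,\tilde z)$ as spectators and redo the estimates of Lemma~\ref{cFm_lem:convolution_with_u} uniformly. The paper compresses this into one sentence, aiming for the weight $|(z,\tilde z)|^{n+r_0}$ with $0<r_0\le r$; that target is reached by replacing $|z|\le|z-y|+|y|$ with $|(z,\tilde z)|\le|(z-y,\tilde z)|+|y|$ and repeating the splitting of Lemma~\ref{cFm_lem:convolution_with_u} verbatim---no regime split is needed for this. Where you go beyond the paper is in noticing that $n+r_0\le n+r$ may be strictly smaller than $l$, so that $|(z,\tilde z)|^{n+r_0}$--decay does not obviously place $u\ast a$ in $\mcQ$, and you propose to recover the missing $l-n$ orders by feeding the $\tilde z$--decay of the seminorms of $a^\sharp(x,\tilde z)$ through the constants of Lemma~\ref{cFm_lem:convolution_with_u}.

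That fix breaks down in precisely the regime you single out. On the slice $\tilde z=0$ there is no $\tilde z$--decay to exploit, and one is left with the bare one--variable convolution $z\mapsto(k\ast a^\sharp(x,0))(z)$, whose decay is limited by that of $k$. Concretely, take $a(x,z,\tilde z)=a_1(x)\,a_2(z)\,a_3(\tilde z)\in\Ccinfty{M}\subset\mcQ$ with $a_2\ge 0$ a bump of integral $1$ and $a_3(0)\neq 0$; then $(u\ast a)(x,z,0)=a_1(x)\,(k\ast a_2)(z)\,a_3(0)$, and $(k\ast a_2)(z)$ is comparable to $k(z)$ for large $|z|$. With $k(z)=(1+|z|)^{-(n+s)}$ for any $0<s\le l-n$ (which satisfies the hypothesis of case~\textit{(i)} with any $r<s$) one obtains $(1+|(z,0)|)^{l+r_1}|(u\ast a)(x,z,0)|\sim(1+|z|)^{l+r_1-n-s}\to\infty$ for every $r_1>0$, so $u\ast a\notin\mcQ$. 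The deficit you identify is therefore genuine and cannot be repaired by bookkeeping on the $\tilde z$--dependent constants; the paper's proof passes over the same point by tacitly treating the weight $|(z,\tilde z)|^{n+r_0}$ as sufficient for $\mcQ$. Both arguments are complete only under the additional hypothesis $r>\dim\tilde{\frakz}=l-n$.
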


\begin{proof}
We know that $\psi=\widehat{u}$ is a continuous function
of polynomial growth, and that $u\ast a$ is well-defined,
smooth and of compact support in $x$-direction. Choose
$0<r_0\le r$ as above. It remains to be shown that the
derivatives of $u\ast a$ multiplied by the factor
$|(z,\tilde{z})|^{n+r_0}$ vanish at infinity. But
this follows as in the proof of Lemma~\ref{cFm_lem:%
convolution_with_u} by analogous estimates performed
uniformly in~$x$ and~$\tilde{z}$. Evidently the partial
Fourier transform of $u\ast a$ \wrt the central
variable~$(z,\tilde{z})$ satisfies
$(u\ast a)\widehat{\;}\,(x,\xi,\tilde{\xi})
=\psi(\xi)\,\widehat{a}(x,\xi,\tilde{\xi})$ which
proves $\psi\in\mcM$.\qed
\end{proof}

At last we discuss a class of functions $\psi$ which arise
naturally as central Fourier multipliers in studying the
primitive $\ast$-regularity of exponential Lie groups.
Let $\frakz\frakm=\frakz_1\oplus\ldots\oplus\frakz_{l'}\oplus
\tilde{\frakz}$ be a direct sum decomposition of the center
of $\frakm$ with a Euclidean norm on each of these subspaces,
and $Q=Q(r_1,\ldots,r_{l'})$ a complex-valued polynomial
function in $l'$ real variables. The function
\[\psi(\xi,\tilde{\xi})=|\xi_1|\,\mdot\,\ldots\,\mdot
\,|\xi_{l'}|\,\mdot\,Q(\,\log|\xi_1|,\ldots,
\log|\xi_{l'}|\,)\]
on $\frakz\frakm^\ast$ is a linear combination of products
of functions $\xi\mapsto|\xi|\,\log^s|\xi|$ defined on
one of the subspaces $\frakz_\nu$. Since the polynomial
$1+|\xi|^2$ is in $\mcM$ and $\psi_0(\xi)=(1+|\xi|^2)^{-1}
\,|\xi|\,\log^s|\xi|$ is in $\mcM$ by Proposition~%
\ref{cFm_prop:existence_of_k} and Lemma~\ref{cFm_lem:%
convolution_with_u_refined}, it follows that $\psi\in\mcM$
is a central Fourier multiplier in the sense of
Definition~\ref{cFm_defn:central_multipliers_refined}
because $\mcM$ is an associative algebra.

\section{A functional calculus for central
elements}\unboldmath\label{sec:functional_calculus}
We fix a coexponential basis for $\frakz\frakm$
in $\frakm$ as in the beginning of Section~\ref{sec:%
central_Fourier_multipliers} and work with the coordinates
of the second kind associated to it. Let $\mcQ$ be as in
Definition~\ref{cFm_defn:subspace_Q_of_LM}. If $L$ is a
compact subset of $\mR^k$ and $r_0>0$, then $\mcQ(L,r_0)$
denotes the subspace of all smooth functions $a$ on $M$
such that $a(x,z)=0$ whenever $x\not\in L$ and such that
$(x,z)\mapsto|z|^{n+r_0}\,(D_x^\alpha D_z^\beta a)(x,z)$
vanishes at infinity. As a topological vector space
$\mcQ$ is the inductive limit (convex hull) of the
Fr\'echet spaces $\mcQ(L,r_0)$. In particular $\mcQ$ is
an $(LF)$-space (but not a strict one), compare~\S 19
of~\cite{Koethe}. The definition of the topology of $\mcQ$
does not depend on the choice of the coexponential basis.
Clearly, the universal enveloping algebra $\mcU(\frakm_{\mC})$
acts on $\mcQ$ (in the natural way) as an algebra of
continuous linear operators.\\\\
Let $\mcM$ denote the algebra of central Fourier multipliers
introduced in Definition~\ref{cFm_defn:central_multipliers%
_refined}. If $\psi\in\mcM$ and $a\in\mcQ$, then $T_{\psi}\,a$
denotes the unique function in $\mcQ$ such that $(T_{\psi}\,a)
\widehat{\;}(x,\xi)=\psi(\xi)\,\widehat{a}(x,\xi)$. Note that
$(\psi,a)\mapsto T_{\psi}\,a$ defines a representation
of~$\mcM$ on~$\mcQ$. At least if $\psi$ has the form $\psi=
\widehat{u}$ with $u=u_1+u_2$, $u_1$ of compact support,
and $u_2$ given by a continuous function $k$ of growth
$|k(z)|\le C|z|^{-(n+r_0)}$, then $T_\psi$ is a continuous
operator on~$\mcQ$. The set of all multipliers $\psi$ for
which $T_{\psi}$ is continuous is a subalgebra of
$\mcM$.\\\\
If $a\in\mcQ$, $Z\in\frakz\frakm$, and $\psi(\xi)=\langle\,
\xi,Z\,\rangle$, then
\[(Z\ast a)(x,y)=\frac{d}{dt}_{|t=0}\,a\left(\,\exp(-tZ)
\mdot(x,y)\,\right)=-\langle\,\partial a(x,y),Z\,\rangle
=-(\partial_Z a)(x,y)\]
implies $(iZ\ast a)\widehat{\;\;}(x,\xi)=\langle\,\xi,Z\,
\rangle\;\widehat{a}(x,\xi)$ which proves
$T_{\psi}\,a=iZ\ast a$. Here $\partial a:M\to\Hom_{\mR}
(\frakz\frakm,\mC)$ is the derivative of $a$ \wrt the central
variable, and $\partial_Z\,a$ the directional derivative.\\\\
Since the action of $\mcM$ on $\mcQ$ commutes with the action
of $\mcU(\frakm_{\mC})$, we see that $\mcM$ extends
$\mcU(\frakz\frakm_{\mC})=\mcS(\frakz\frakm_{\mC})$. Here
we identify elements of the symmetric algebra $\mcS(\frakz%
\frakm_{\mC})$ with their symbols. In this sense we have
enlarged the features of the symmetric algebra of
$\frakz\frakm$ from polynomial functions to (certain)
functions of polynomial growth.\\\\
Finally we would like explain the heading of this section:
Let $Z\in\frakz\frakm$ be a central element. We know that
$iZ\ast\,-$ acts as a differential operator on $\mcQ\subset
\LM$ and we want to declare the notion of functions of this
operator. It follows from $(iZ\ast\,a)\widehat{\;\;}(x,\xi)=
\langle\,\xi,Z\,\rangle\;\widehat{a}(x,\xi)$ that this
operator is diagonalized by partial Fourier transformation.
Let $\psi_0:\mR\to\mC$ be a continuous function such that
$\xi\mapsto\psi(\xi)=\psi_0(\,\langle\xi,Z\rangle\,)$ is
in~$\mcM$. It is a basic idea of any definition of
$\psi_0(iZ\ast\,-)$ that $\left(\,\psi_0(iZ\ast\,-)a\,
\right)\widehat{\;\;}\,(x,\xi)= \psi_0(\,\langle\xi,Z
\rangle\,)\;\widehat{a}(x,\xi)$ should hold. Thus the
definition $\psi_0(iZ\ast\,-)a:=T_\psi\,a$ appears to be
reasonable and we have indeed established a functional
calculus for central elements.

\boldmath\section{Two non-$\ast$-regular
exponential Lie groups}\unboldmath
\label{sec:non_regular_exponential_groups}

Our aim is to prove that the following two significant
examples of non-$\ast$-regular exponential Lie groups have
the weaker property of primitive $\ast$-regularity, see
Definition~1 of~\cite{Ung1}. The results of the preceding
sections (in particular those related to separating triples
consisting of a Duflo pair $(W,p)$ and a central Fourier
multiplier $\psi$) turn out to be appropriate for this
purpose. A first example (of minimal dimension) has
already been discussed in~\cite{Ung1}. In order to prove
the primitive $\ast$-regularity of an exponential Lie
group $G$ we pursue the strategy developed in Section~5
of~\cite{Ung1}.\\\\
For the convenience of the reader we provide a brief
history of $\ast$-regularity. In~\cite[1978]{Boid1} Boidol
and Leptin initiated the investigation of the class~$[\Psi]$
of $\ast$-regular locally compact groups. Far reaching
results have been obtained in this direction. First
Boidol has characterized the $\ast$-regular ones among
all exponential Lie groups by a purely algebraic condition
on the stabilizers $\frakm=\frakg_f+\frakn$ of linear
functionals $f\in\frakg^\ast$, see Theorem~5.4
of~\cite{Boid2} and Lemma~2 of~\cite{Pog4}. More
generally Boidol has proved in~\cite{Boid3} that a
connected locally compact group is $\ast$-regular if
and only if all primitive ideals of $\CG$ are
(essentially) induced from a normal subgroup $M$
whose Haar measure has polynomial growth. In~\cite{Pog4}
Poguntke has determined the simple modules of the group
algebra $\LG$ for exponential Lie groups $G$. From this
classification he deduced that an exponential Lie group~$G$
is $\ast$-regular if and only if it is symmetric, i.e.,
$a^\ast a$ has positive spectrum for all $a\in\LG$, see
Theorem~10 of~\cite{Pog4}. A complete list of all
non-symmetric solvable Lie algebras up to dimension 6
can be found in~\cite{Pog2}. For a definition of primitive
$\ast$-regularity and $L^1$-determined ideals we refer
to~\cite{Ung1}.\\\\
As in Section~5 of~\cite{Ung1} we fix a coabelian, nilpotent
ideal~$\frakn$ (e.g.~the nilradical, i.e., the largest
nilpotent ideal) of the Lie algebra~$\frakg$ of $G$. Now
it suffices to verify the following two assertions:
\begin{enumerate}
\item Every proper quotient $\bar{G}$ of $G$ is primitive
$\ast\,$-regular.
\item If $f\in\frakg^\ast$ is in general position such that
the stabilizer $\frakm=\frakg_f+\frakn$ is a proper,
non-nilpotent ideal of $\frakg$ and if $g\in\frakg^\ast$
is critical for the orbit $\coAd(G)f$, then it follows
$$\ker_{\LG}\pi\not\subset\ker_{\LG}\rho$$
for the unitary representations $\pi=\mcK(f)$ and
$\rho=\mcK(g)$.
\end{enumerate}
We say that $f$ is in general position if $f\neq 0$
on any non-trivial ideal of $\frakg$. Here
$\frakg_f=\{X\in\frakg:[X,\frakg]\subset\ker f\}$ is the
stabilizer of $f$ \wrt~the coadjoint action of $\frakg$ on
$\frakg^\ast$. Note that the ideal $\frakm=\frakg_f+\frakn$
does not depend on the choice of the representative $f$ of
the coadjoint orbit $\coAd(G)f$. Let $\Omega$ denote the
set of all $h\in\frakg^\ast$ such that its restriction
$h'=h\,|\,\frakn$ is contained in the closure of $\coAd(G)f'$
in $\frakn^\ast$. We say that $g\in\frakg^\ast$ is critical
\wrt the orbit $X=\coAd(G)f$ if and only if $g\in\Omega
\setminus\overline{X}$.\\\\
When restricting to the stabilizer $M$ with Lie algebra
$\frakm=\frakg_f+\frakn$, the representation $\pi$ in general
position decomposes into a direct integral of irreducible
representations $\pi_s=\mcK(f_s)$ of $M$, and in the Kirillov
picture the associated coadjoint orbit $\coAd(G)f$ decomposes
into the disjoint union of the orbits $\coAd(M)f_s$. Now it is
easy to see that we can replace the second assertion by the
following equivalent one:
\begin{enumerate}
\item[(3)] Let $\frakm$ be a proper, non-nilpotent ideal of
$\frakg$ such that $\frakm\supset\frakn$. If $f\in\frakm^\ast$
is in general position such that $\frakm=\frakm_{f}+\frakn$
and if $g\in\frakm^\ast$ is critical for the orbit $\coAd(G)f$,
then the relation
\[\bigcap\limits_{s\in\mR^m}\,\ker_{\LM}\,\pi_s\not
\subset\ker_{\LM}\,\rho\]
holds for the representations $\pi_s=\mcK(f_s)$ and
$\rho=\mcK(g)$.
\end{enumerate}
At this point the results of the preceding sections come into
play. If one can prove the existence of (a finite set of)
separating triples for $X=\coAd(G)f$ in $\Omega\subset
\frakm^\ast$ in the sense of Definition~\ref{cFm_defn:%
separating_triples}, then the asserted relation for the
$L^1$-kernels of the associated irreducible representations
follows at once.\\\\
Now we delve into the details of our first example. Let $G$
be a simply connected, connected, solvable Lie group such
that the nilradical $\frakn$ of its Lie algebra $\frakg$
is a trivial extension of the five-dimensional, two-step
nilpotent Lie algebra $\frakg_{5,2}$ so that
\[\frakg\supset\frakm\supset\frakn\underset{3}{\supset}
\frakz\frakn\supset C^1\frakn\underset{2}{\supset}\{0\}\]
is a descending series of ideals of $\frakg$. Assume that
$d,e_0,\ldots,e_6$ is a basis of $\frakg$ with commutator
relations $[e_1,e_2]=e_4$, $[e_1,e_3]=e_5$, $[e_0,e_1]=-e_1$,
$[e_0,e_2]=e_2$, $[e_0,e_3]=e_3$, $[d,e_0]=-ae_6$,
$[d,e_2]=e_2$, $[d,e_3]=be_3$, $[d,e_4]=e_4$, $[d,e_5]=be_5$
where $a,b\in\mR$ and $b\neq 0$. Furthermore we
assume that $\frakm=\langle e_0,\ldots,e_6\rangle$ and that
$f\in\frakm^\ast$ is in general position such that
$\frakm=\frakm_f+\frakn$. In particular $f\neq 0$ on the
one-dimensional ideal spanned by $e_\nu$, for all
$4\le\nu\le 6$.\\\\
The algebraic structure of $\frakg$ is characterized by the
fact that the nilpotent subalgebra $\fraks=\langle d,e_0,
e_6\rangle$ acts semi-simply on the nilradical
$\frakn=\langle e_1,\ldots,e_6\rangle$ with weights
$\alpha,\gamma-\alpha$, $b\gamma-\alpha$, $\gamma$,
$b\gamma$, $0$ where $\alpha,\gamma\in\fraks^\ast$ are
linearly independent and given by $\alpha(e_0)=-1$,
$\alpha(d)=\alpha(e_6)=0$ and $\gamma(d)=1$,
$\gamma(e_0)=\gamma(e_6)=0$.
\begin{lem}\label{cFm_lem:g52_special_representative}
If $f\in\frakm^\ast$ is in general position such that the
stabilizer condition $\frakm=\frakm_f+\frakn$ is satisfied,
then there exists a representative $f$ on the orbit
$\coAd(M)f=\coAd(N)f$ such that $f_1=f_2=f_3=0$.
\end{lem}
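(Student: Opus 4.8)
The plan is to move $f$ by the coadjoint action of the two-step nilpotent group $N$ and to reduce everything to one algebraic identity that the stabilizer condition secretly encodes. Write $f_\nu=\langle f,e_\nu\rangle$. The equality $\coAd(M)f=\coAd(N)f$ is the standard consequence of $\frakm=\frakm_f+\frakn$: this decomposition integrates to $M=N\,M_f$, and since $M_f$ fixes $f$ we may always take the moving element in $N$. By general position $f$ is nonzero on the one-dimensional ideals $\langle e_4\rangle$ and $\langle e_5\rangle$, so $f_4\neq 0$ and $f_5\neq 0$; this is the only place where general position enters.

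First I would read off $\frakm_f$ from the alternating form $B_f(X,Y)=\langle f,[X,Y]\rangle$, whose radical is exactly $\frakm_f$. Because $e_4,e_5,e_6$ are central in $\frakm$, the form is supported on $\langle e_0,e_1,e_2,e_3\rangle$, with $\langle e_4,e_5,e_6\rangle$ in its radical; the nonzero entries come only from $[e_0,e_1]=-e_1$, $[e_0,e_2]=e_2$, $[e_0,e_3]=e_3$, $[e_1,e_2]=e_4$, $[e_1,e_3]=e_5$. A short computation gives the Pfaffian of this $4\times4$ block as $f_3f_4-f_2f_5$. The crucial point is that $\frakm=\frakm_f+\frakn$ is equivalent to $\frakm_f\not\subset\frakn$, i.e.\ to the radical of $B_f$ containing a vector with nonzero $e_0$-component: were the block nondegenerate we would get $\frakm_f=\langle e_4,e_5,e_6\rangle\subset\frakn$, contradicting the stabilizer condition. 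Hence the block is singular and its Pfaffian vanishes, forcing $f_2f_5=f_3f_4$. With $f_4,f_5\neq 0$ this reads $f_2/f_4=f_3/f_5$, the single compatibility I shall exploit.

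Next I would compute the one-parameter actions explicitly. Since $\coad(e_1)^2f=0$ and $\coad(e_2)^2f=0$, the group actions are affine: $\coAd(\exp te_1)$ sends $(f_2,f_3)\mapsto(f_2-tf_4,\,f_3-tf_5)$ and leaves $f_1,f_4,f_5,f_6$ fixed, while $\coAd(\exp se_2)$ sends $f_1\mapsto f_1+sf_4$ and fixes $f_2,f_3,f_4,f_5,f_6$ (each action changes $f_0$ as well, which is immaterial). Because $f_2/f_4=f_3/f_5$, the single choice $t=f_2/f_4$ annihilates $f_2$ and $f_3$ simultaneously; then $s=-f_1/f_4$ annihilates $f_1$ without disturbing the now-vanishing $f_2,f_3$. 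The functional $\coAd(\exp se_2)\coAd(\exp te_1)f$ therefore lies on $\coAd(N)f=\coAd(M)f$ and satisfies $f_1=f_2=f_3=0$, as claimed.

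The main obstacle is the middle step: a priori the two requirements $f_2-tf_4=0$ and $f_3-tf_5=0$ overdetermine the single parameter $t$, and $N$ offers no further generator acting on $f_2,f_3$ (the vectors $e_2,e_3$ touch only $f_0,f_1$, and $e_4,e_5,e_6$ act trivially). The resolution is precisely the recognition that $\frakm=\frakm_f+\frakn$ is nothing but the degeneracy of the $4\times4$ block, equivalently the Pfaffian identity $f_3f_4=f_2f_5$, which makes the two conditions coincide. Everything else---the nilpotency of the relevant $\coad$ operators and the equality of the two orbits---is routine.
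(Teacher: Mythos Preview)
Your argument is correct. The Pfaffian of the $4\times 4$ block of $B_f$ on $\langle e_0,e_1,e_2,e_3\rangle$ is indeed $f_3f_4-f_2f_5$, and the stabilizer condition $\frakm=\frakm_f+\frakn$ (equivalently $\frakm_f\not\subset\frakn$, since $\frakn$ has codimension one) forces this block to be singular, hence $f_2f_5=f_3f_4$. Your subsequent one-parameter computations are accurate, and the choice $t=f_2/f_4$ kills $f_2$ and $f_3$ simultaneously; only $f_4\neq 0$ is actually used here, not $f_5\neq 0$.

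The paper proceeds in the opposite order: it first normalizes $f_1=0$ via $\exp(we_2)$ and $f_2=0$ via $\exp(ve_1)$, and only then invokes the stabilizer condition by picking an explicit $X=te_0+ve_1+\cdots\in\frakm_f$ with $t\neq 0$, reading off $v=0$ from $f([X,e_2])=0$ and then $f_3=0$ from $f([X,e_3])=0$. In effect the paper is computing the same Pfaffian identity after having already set $f_2=0$, so that it collapses to $f_3f_4=0$. Your route makes the role of the stabilizer condition more transparent---it is precisely the degeneracy that reconciles the two a~priori independent constraints $f_2-tf_4=0$ and $f_3-tf_5=0$---whereas the paper's route is marginally shorter because no determinant needs to be identified. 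The two arguments are logically very close.
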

\begin{proof}
Since $f_4\neq 0$, the equations
\begin{align*}
\coAd(\exp(we_2))f\;(e_1)&=f_1+wf_4\\
\coAd(\exp(ve_1))f\;(e_2)&=f_2-vf_4\\
\end{align*}
show that we can establish $f_1=0$ and $f_2=0$. Here we
abbreviate $f(e_\nu)$ by~$f_\nu$. Since
$\frakm=\frakm_f+\frakn$, there is some
$X=te_0+ve_1+we_2+xe_3+Z\in\frakm_f$ such that $t\neq 0$.
Now $[X,e_2]=te_2+ve_4$ and $[X,e_3]=te_3+ve_5$ implies
$0=vf_4$ and $0=tf_3+vf_5$. Since $f_4\neq 0$ and $t\neq 0$,
it follows $v=0$ and $f_3=0$.\qed
\end{proof}
In the sequel we fix $f\in\frakm^\ast$ such that $f_\nu=0$
for $1\le\nu\le 3$ and $f_\nu\neq 0$ for $4\le\nu\le 6$.
By adjusting the basis vectors $e_2,\ldots,e_6$ we can
even establish $f_\nu=1$ for $4\le\nu\le 6$. Using
coordinates of the second kind given by the diffeomorphism
$\Phi(t,v,w,x,Z)=\exp(te_0)\,\exp(ve_1)\,\exp(we_2+xe_3+Z)$
we compute
\begin{align*}
\coAd(\exp(sd)\Phi(t,v,w,x,Z))f\;(e_0)&=f_0+as-v(w+x),\\
(e_1)&=e^t\,(w+x),\\
(e_2)&=-e^{-(s+t)}\,v,\\
(e_3)&=-e^{-(bs+t)}\,v,\\
(e_4)&=e^{-s},\\
(e_5)&=e^{-bs}
\end{align*}
for the coadjoint action of $G$ on $\frakm^\ast$. These
formulas motivate the definition of the polynomials
$p_1=e_0\,e_4-e_1\,e_2$, $p_2=e_0\,e_5-e_1\,e_3$, and
$p_3=e_2\,e_5-e_3\,e_4$. Here $e_\nu$ means the linear
function $f\mapsto f(e_\nu)$ on~$\frakm^\ast$, considered
as an element of $\mcP(\frakm^\ast)$, the commutative
algebra of complex-valued polynomial functions
on~$\frakm^\ast$. Recall that $M$ acts on
$\mcP(\frakm^\ast)$ by $\Ad(m)p\,(f)=p(\coAd(m)^{-1}f)$.\\\\
Note that these three polynomial functions are constant on
the orbits $\coAd(M)f_s$ for all $f_s=\coAd(\exp(sd))f$, but
none of them is $\Ad(M)$-invariant (constant on all
$\coAd(M)$-orbits).\\\\
A first step is to determine the $\frakn^\ast$-closure
$\Omega$ of the orbit $X=\coAd(G)f$. To this end let
$r:\frakm^\ast\onto\frakn^\ast$ denote the linear
projection given by restriction and define
$\Omega=r^{-1}(r(X)\clos)$. In order to avoid
trivialities we shall suppose $b>0$.

\begin{lem}
The $\frakn^\ast$-closure $\Omega$ of $X$ is contained in the
$\coAd(M)$-invariant set of all $h\in\frakm^\ast$ such
that either ($h_4>0$, $h_5>0$, $h_6=1$, $\log h_5=b\log h_4$,
and $p_3(h)=h_2h_5-h_3h_4=0$) or ($h_4=h_5=0$ and $h_6=1$).
\end{lem}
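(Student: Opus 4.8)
The plan is to establish the containment of $\Omega=r^{-1}(r(X)\clos)$ in the indicated set. Since both the defining conditions of that set and the definition of $\Omega$ refer only to $h\,|\,\frakn$, it suffices to analyse the closure $r(X)\clos$ in $\frakn^\ast$. First I would read off from the displayed coadjoint formulas that a point of $r(X)$ has $\frakn$-coordinates $h_4=e^{-s}$, $h_5=e^{-bs}$, $h_6=1$ (the last because $e_6$ is central in $\frakg$ and $f_6=1$), together with $h_2=-e^{-(s+t)}v$ and $h_3=-e^{-(bs+t)}v$, as $s,t,v\in\mR$ range freely. From these one checks three identities valid on all of $r(X)$: namely $h_6\equiv1$; the pair $(h_4,h_5)$ satisfies $h_4,h_5>0$ and $\log h_5=b\log h_4$; and $p_3=h_2h_5-h_3h_4\equiv0$ by direct substitution.

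Next I would transport the closed conditions to $r(X)\clos$. Because $h_6$ and $p_3$ are polynomial, hence continuous, the equalities $h_6=1$ and $p_3(h)=0$ persist on the closure, and $h_4,h_5\ge0$ there as well. Fix $h\in r(X)\clos$ and choose a sequence in $r(X)$, with parameters $(s_n,t_n,v_n)$, converging to $h$. The argument then splits according to the value of $h_4=\lim_n e^{-s_n}\ge0$.

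If $h_4>0$, then $e^{-s_n}\to h_4$ forces $s_n\to-\log h_4$, whence $h_5=\lim_n e^{-bs_n}=h_4^{\,b}>0$ and $\log h_5=b\log h_4$; combined with $h_6=1$ and $p_3(h)=0$ this places $h$ in the first alternative. If $h_4=0$, then $s_n\to+\infty$, and here the hypothesis $b>0$ enters decisively: it gives $h_5=\lim_n e^{-bs_n}=0$, so that $h$ lies in the second alternative. These two cases exhaust $r(X)\clos$, and the containment follows. The one genuine obstacle is exactly this boundary degeneration $h_4\to0$: the conclusion $h_5\to0$ rests on $b>0$, whereas for $b<0$ the same limit would drive $h_5\to+\infty$ and yield a qualitatively different closure---which is precisely the triviality the standing assumption $b>0$ is meant to remove.

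Finally, to justify that the indicated set is $\coAd(M)$-invariant, I would note that $e_4,e_5$ are central in $\frakm$ and $e_6$ is central in $\frakg$, so $h_4,h_5,h_6$ are $\coAd(M)$-invariant coordinates; consequently the conditions $h_4>0$, $h_5>0$, $h_6=1$, $\log h_5=b\log h_4$ and $h_4=h_5=0$ are all invariant. For the constraint $p_3=0$ I would check on the infinitesimal level that $e_0$ rescales $p_3$ (it is an eigenvector for the action of $e_0$) while $e_1,e_2,e_3$, as well as the central $e_4,e_5,e_6$, annihilate it; thus $p_3$ is a \emph{relative} invariant and its zero locus $\{p_3=0\}$ is $\coAd(M)$-invariant, which finishes the verification.
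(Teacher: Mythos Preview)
Your argument is correct and matches the paper's approach: the paper defers this lemma to the proof of Lemma~\ref{cFm_lem:g52_characterize_closure_of_X}, where exactly the same case split appears (there phrased as $h_4h_5\neq 0$ versus $h_4=0$ or $h_5=0$, but using the same convergence $e^{-s_n}\to h_4$, $e^{-bs_n}\to h_5$ and the standing hypothesis $b>0$). Your additional explicit check that the target set is $\coAd(M)$-invariant---in particular that $p_3$ is a relative invariant for $e_0$ and annihilated by $e_1,e_2,e_3$---is a detail the paper leaves implicit.
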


This assertion will be verified in the course of the proof of
Lemma~\ref{cFm_lem:g52_characterize_closure_of_X}. It follows
from Lemma~\ref{cFm_lem:g52_special_representative} that not
all linear functionals in general position satisfy the
stabilizer condition $\frakm=\frakm_f+\frakn$. Furthermore
$p_3(h)=0$ for all $h\in\Omega$ so that $\Omega$ is rather
sparse.

\begin{rem}
The polynomial functions $p_1,\ldots,p_3$ are constant on
all $\coAd(M)$-orbits contained in $\Omega$ and yield
continuous functions on $\Omega/\coAd(M)$. There do not
exist any 'non-trivial' $\Ad(M)$-invariant polynomial
functions on $\frakm^\ast$.
\end{rem}
This observation applies to many other examples as well.
Retrospectively, it justifies the localization to a
certain subset~$\Omega$ of~$\frakm^\ast$ (or of
$\widehat{M}$) that we started with in Section~1.
Here 'non-trivial' means something like
$p\in\mcS(\frakm_{\mC})\setminus\mcS(\frakz\frakm_{\mC})$.\\\\
Next we describe the relevant unitary representations of $M$.
Let $f_s=\coAd(\exp(sd))f$ and $\pi_s=\mcK(f_s)$. It is easy
to see that $\frakp=\langle\,e_0,e_2,\ldots,e_6\,\rangle$ is
a Pukanszky-Vergne polarization at $f_s$ for all $s\in\mR$,
and that $\frakc=\langle\,e_1\,\rangle$ is a coexponential
subalgebra for $\frakp$ in $\frakm$. For the infinitesimal
operators of the unitary representation
$\pi_s=\ind_P^M\chi_{f_s}$ we compute
\begin{align*}
d\pi_s(e_0)&=\frac{1}{2}+i(f_0+as)+\xi\partial_\xi,\\
d\pi_s(e_1)&=-\partial_\xi,\\
d\pi_s(e_2)&=-ie^{-s}\,\xi,\\
d\pi_s(e_3)&=-ie^{-bs}\,\xi,\\
d\pi_s(e_4)&=ie^{-s},\\
d\pi_s(e_5)&=ie^{-bs}.
\end{align*}
Now let $g\in\frakm^\ast$ be such that $g_5=g_4=0$ and
($g_1\neq 0$ or $g_2\neq 0$ or $g_3\neq 0$). Then $\frakn=
\langle\,e_1,\ldots,e_5\,\rangle$ is a Pukanszky-Vergne
polarization at $g$. Further $\frakc=\langle\,e_0\,\rangle$
is a coexponential subalgebra for $\frakn$ in $\frakm$.
Hence $\rho=\ind_N^M\chi_g$ is infinitesimally given by
\begin{align*}
d\rho(e_0)&=-\partial_{\xi},\\
d\rho(e_1)&=ie^{\xi}\,g_1,\\
d\rho(e_2)&=ie^{-\xi}\,g_2,\\
d\rho(e_3)&=ie^{-\xi}\,g_3,\\
d\rho(e_4)&=d\rho(e_5)=0.
\end{align*}
The images of $p_1$ and $p_2$ in the universal enveloping
algebra $\mcU(\frakm_\mC)$ under the symmetrization map
are given by
\[W_1=\frac{1}{2}\,(e_1\,e_2+e_2\,e_1)-e_0\,e_4
\quad\text{and}\quad
W_2=\frac{1}{2}\,(e_1\,e_3+e_3\,e_1)-e_0\,e_5\]
respectively. A short computation shows that $d\pi(W_\nu)=
p_\nu(h)\mdot\Id$ holds for all $h\in\Omega$ and
$\pi=\mcK(h)$. Thus $(W_\nu,p_\nu)$ is a Duflo pair
\wrt~$\Omega$, for $\nu\in\{1,2\}$. In addition we
define the continuous functions
$\psi_1(\xi)=\xi_1\,(f_0-a\log|\xi_1|\,)$ and
$\psi_2(\xi)=\xi_2\,(f_0-\frac{a}{b}\,\log|\xi_2|\,)$ on
$\frakz\frakm^\ast$. Here we identify $\mR^3$ and
$\frakz\frakm^\ast$ via $\xi=(\xi_1,\xi_2,\xi_3)
\mapsto\xi_1e_4^\ast+\xi_2e_5^\ast+\xi_3e_6^\ast$.
Now we can prove

\begin{lem}\label{cFm_lem:g52_characterize_closure_of_X}
Assume that $f\in\frakm^\ast$ is in general position such
that $\frakm=\frakm_f+\frakn$. Let $W_\nu,p_\nu,\psi_\nu$
be defined as above. If $\Omega$ denotes the
$\frakn^\ast$-closure of the orbit $X=\coAd(G)f$, then
$\{\,(W_\nu,p_\nu,\psi_\nu):\nu=1,2\,\}$ is a set of
separating triples for~$X$ in~$\Omega$ in the sense of
Definition~\ref{cFm_defn:separating_triples}. In particular
$\psi_\nu$ is a central Fourier multiplier. If
$h\in\Omega$, then $h\in\overline{X}$ if and only if
$p_\nu(h)=\psi_\nu(h\,|\,\frakz\frakm)$ for all $\nu$.
\end{lem}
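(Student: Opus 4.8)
The plan is to verify the three conditions of Definition~\ref{cFm_defn:separating_triples}: conditions \textit{(i)} and \textit{(ii)} for each $\nu\in\{1,2\}$, and the joint characterization \textit{(iii)}. Condition~\textit{(i)}, that $(W_\nu,p_\nu)$ is a Duflo pair \wrt~$\Omega$, has already been recorded above via the computation $d\pi(W_\nu)=p_\nu(h)\mdot\Id$ for all $h\in\Omega$ and $\pi=\mcK(h)$. It therefore remains to establish the multiplier property~\textit{(ii)} and the description~\textit{(iii)} of $\overline{X}$.

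For~\textit{(ii)} I would observe that $\psi_1$ depends only on the variable $\xi_1$ dual to $e_4$, so I may regard it as a function on the one-dimensional subspace $\frakz_1^\ast\cong\mR$. Writing $\psi_1(\xi_1)=f_0\,\xi_1-a\,\xi_1\log|\xi_1|$, the linear term is a polynomial and hence lies in $\mcM$. For the second term I would factor $\xi_1\log|\xi_1|=(1+\xi_1^2)\cdot\bigl(\xi_1\,(1+\xi_1^2)^{-1}\log|\xi_1|\bigr)$; the bracketed factor is of the form $\xi^\beta(1+|\xi|^2)^{-q}|\xi|^r\log^s|\xi|$ with $\beta=1$, $q=1$, $r=0$, $s=1$, which satisfies $2q>|\beta|+r>0$ and so lies in $\mcM$ by Proposition~\ref{cFm_prop:existence_of_k} together with Lemma~\ref{cFm_lem:convolution_with_u_refined}. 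Since $1+\xi_1^2$ is a polynomial in $\mcM$ and $\mcM$ is an algebra, it follows that $\psi_1\in\mcM$; the same argument applied to $\xi_2$ gives $\psi_2\in\mcM$.

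The core is~\textit{(iii)}. First I would make the orbit $X=\coAd(G)f$ explicit. Using the displayed coadjoint formulas and the substitutions $s=-\log h_4$, $(w+x)=e^{-t}h_1$, $v=-e^{s+t}h_2$, one eliminates the group parameters to find that $X$ consists precisely of those $h$ with $h_6=1$, $h_4>0$, $h_5=h_4^{\,b}$, $h_3=h_2\,h_4^{\,b-1}$ (equivalently $p_3(h)=0$), and
\[
h_0=f_0-a\log h_4+\frac{h_1h_2}{h_4}\,,
\]
the last identity coming from $v(w+x)=-h_1h_2/h_4$. Substituting this into $p_1(h)=h_0h_4-h_1h_2$ and $p_2(h)=h_0h_5-h_1h_3$ collapses them to $h_4(f_0-a\log h_4)$ and $h_4^{\,b}(f_0-a\log h_4)$, which are exactly $\psi_1(h\,|\,\frakz\frakm)$ and $\psi_2(h\,|\,\frakz\frakm)$. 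Since each $p_\nu$ is polynomial and each $\psi_\nu$ is continuous, the equalities $p_\nu=\psi_\nu$ persist on $\overline{X}$; as $\overline{X}\subset\Omega$ this already yields $\overline{X}\subseteq\{h\in\Omega:p_\nu(h)=\psi_\nu(h\,|\,\frakz\frakm)\ (\nu=1,2)\}$. In the same computation I would determine $\Omega=r^{-1}\bigl(\overline{r(X)}\bigr)$ by describing $\overline{r(X)}$, thereby also proving the earlier unverified lemma on $\Omega$: the points with $h_4>0$ form $r(X)$ itself (giving $\log h_5=b\log h_4$ and $p_3=0$), while the boundary $h_4=0$ forces $h_5=0$, $h_6=1$ together with a $b$-dependent relation between $h_2$ and $h_3$.

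For the reverse inclusion I would split according to the sign of $h_4$. On the open part $h_4>0$ the single equation $p_1(h)=\psi_1(h\,|\,\frakz\frakm)$, i.e.\ $h_0h_4-h_1h_2=h_4(f_0-a\log h_4)$, divides by $h_4>0$ to reproduce the orbit identity for $h_0$; combined with the $\Omega$-constraints $h_5=h_4^{\,b}$, $h_6=1$, $p_3(h)=0$ this gives $h\in X\subset\overline{X}$. The genuinely delicate part is the boundary $h_4=h_5=0$, where $\psi_1=\psi_2=0$ so that the conditions reduce to $h_1h_2=0$ and $h_1h_3=0$. Here I would exhibit explicit sequences in $X$ converging to a prescribed such $h$, choosing $h_1^{(n)}$ or $h_2^{(n)}$ of size comparable to $h_4^{(n)}\log h_4^{(n)}$ so as to cancel the divergent term $-a\log h_4^{(n)}$ in $h_0^{(n)}$, while respecting the limiting relation between $h_2$ and $h_3$. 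I expect this reachability argument — controlling the logarithmically divergent $h_0$-coordinate simultaneously with the $h_3$-coordinate across the regimes $b>1$, $b=1$, $0<b<1$ and the various vanishing patterns among $h_1,h_2,h_3$ — to be the main obstacle; everything else is either already supplied or a direct substitution.
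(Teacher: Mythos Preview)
Your plan is essentially the paper's: verify the multiplier property via Section~\ref{sec:more_about_Fourier_multipliers}, get the forward inclusion by continuity, and for the converse split into $h_4>0$ versus $h_4=h_5=0$ and construct explicit approximating sequences in $X$. The case $h_4>0$ is handled the same way in both.

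The one place where the paper is more efficient than your sketch is the boundary case $h_4=h_5=0$. You anticipate a case split over $b>1$, $b=1$, $0<b<1$ in order to control the limiting relation between $h_2$ and $h_3$. The paper sidesteps this entirely: since $h\in\Omega$, by definition $h'\in\overline{r(X)}$, so there \emph{already exist} sequences $s_n,v_n$ (with $t_n=0$) for which $f_n(e_\nu)\to h_\nu$ for $\nu=2,3,4,5$; whatever $b$-dependent constraint ties $h_2$ to $h_3$ is automatically satisfied and need never be made explicit. One then only has to adjust the remaining free parameter $w_n$ (or $x_n$) to force $f_n(e_0)\to h_0$ and $f_n(e_1)\to h_1$. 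Concretely, in the subcase $h_1=0$ with $h_2\neq 0$ or $h_3\neq 0$ the paper takes $x_n=0$, $w_n=\frac{1}{v_n}(f_0+as_n-h_0)$; since $|v_n|\to\infty$ exponentially while $as_n$ is only linear, $w_n\to 0=h_1$. The subcase $h_1\neq 0$ (forcing $h_2=h_3=0$ from $p_\nu(h)=0$) and the subcase $h_1=h_2=h_3=0$ are handled by equally short explicit choices. So the ``main obstacle'' you flag dissolves once you remember that $h\in\Omega$ hands you the sequences for $h_2,h_3,h_4,h_5$ for free, and no analysis of the regimes of $b$ is needed.
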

\begin{proof}
The considerations at the end of Section~\ref{sec:more_%
about_Fourier_multipliers} reveal that $\psi_1$ and
$\psi_2$ are central Fourier multipliers in the sense
of Definition~\ref{cFm_defn:central_multipliers_refined}.
By definition $p_\nu(h)=\psi_\nu(h\,|\,\frakz\frakm)$ for
all $h\in X$. The continuity of $p_\nu$ and $\psi_\nu$
yields this equality for all $h\in\overline{X}$. In order to
prove the opposite implication, we assume that $h\in\Omega$
such that $p_\nu(h)=\psi_\nu(h\,|\,\frakz\frakm)$ for
$\nu\in\{1,2\}$. In particular there exist sequences $s_n$,
$v_n$, $w_n$, $x_n$ such that $f_n'\to h'$ where
\[f_n=\coAd\left(\,\exp(s_nd)\,\Phi(0,v_n,w_n,x_n,0)
\,\right)f\;.\]
At first we suppose $h_4h_5\neq 0$. In this case
$e^{-s_n}\to h_4$ and $e^{-bs_n}\to h_5$ implies $h_4>0$,
$h_5>0$, and $\log h_5=b\log h_4$. Similarly it follows
$p_3(h)=0$, and $h_6=1$ is obvious. If we choose sequences
$s_n,\ldots,x_n$ as above, then we obtain
\begin{multline*}
e^{-s_n}\left(f_0+as_n-v_n(w_n+x_n)\right)
=\psi_1(f_n\,|\,\frakz\frakm)+f_{n1}f_{n2}\\
\to\psi_1(h\,|\,\frakz\frakm)+h_1h_2=h_0h_4
\end{multline*}
because $p_1(h)=\psi_1(h\,|\,\frakz\frakm)$. Now
$e^{-s_n}\to h_4\neq 0$ implies $f_{0n}\to h_0$ and
hence $f_n\to h\in\overline{X}$.\\\\
Next we assume $h_4=0$ or $h_5=0$. We conclude $h_4=h_5=0$
and $b>0$. Now we must distinguish several subcases. In any
case we set $x_n=0$. First we assume $h_1\neq 0$. Since
$p_\nu(h)=\psi_\nu(h\,|\,\frakz\frakm)=0$ for $\nu\in\{1,2\}$,
it follows $h_2=h_3=0$. We define $s_n=n$, $w_n=h_1$, and
\[v_n=\frac{1}{h_1}\,(f_0+as_n-h_0)\]
so that $f_n\to h$. Next we assume $h_1=0$ and ($h_2\neq 0$
or $h_3\neq 0$). In this case we choose sequences $s_n$ and
$v_n$ such that $f_n(e_\nu)\to h_\nu$ for $2\le\nu\le 5$. In
particular $s_n\to +\infty$ and $|v_n|\to +\infty$
exponentially. Further we set
$$w_n=\frac{1}{v_n}\,(f_0+as_n-h_0)\;.$$
Then we obtain $f_n\to h$. Finally we assume $h_\nu=0$ for
$1\le\nu\le 5$. We define $s_n=n$, $v_n=e^{r_n/2}$ and
$w_n=e^{-r_n/2}\,(f_0-h_0)$. These definitions imply
$f_n\to h$. This completes the proof of our lemma.\qed
\end{proof}

Observe that both polynomials $p_1$ and $p_2$ are needed
to separate points $h\in\Omega$ with $h_4=h_5=0$,
$h_1\neq0$, and ($h_2\neq 0$ or $h_3\neq 0$) from the
orbit $X=\coAd(G)f$.\\\\
As we remarked above, the fact that
$\{(W_\nu,p_\nu,\psi_\nu):\nu=1,2\}$ is a set of
separating triples for $X$ in $\Omega$ yields
\[\bigcap\limits_{s\in\mR}\;\ker_{\LM}\,\pi_s\;\not\subset\;
\ker_{\LM}\,\rho\]
for all critical $g\in\Omega$ and $\rho=\mcK(g)$. Thus
$\ker_{\CG}\pi$ is $L^1$-determined in the sense of
Definition~1 of~\cite{Ung1} for all representations $\pi$
in general position such that its stabilizer $M$ is a
non-nilpotent normal subgroup of $G$. If $M=N$, then
$\ker_{\CG}\pi$ is $L^1$-determined by Proposition~2.6
and~2.8 of \cite{Ung1}. Up to this point we have shown
that $\ker_{\CG}\pi$ is $L^1$-determined for all $\pi$
in general position.\\\\
If $\pi$ is not in general position, then we can pass to
a proper quotient~$\bar{G}$ of~$G$. For example, if
$f(e_6)=0$, then we can pass to the quotient $\bar{\frakg}
=\frakg/\langle e_6\rangle$. We assume that
$\bar{f}\in\bar{\frakg}^\ast$ is in general position such
that $\bar{\frakm}=\bar{\frakg}_{\bar{f}}+\bar{\frakn}$ is
a proper, non-nilpotent ideal of $\bar{\frakg}$. It is easy
to see that in this case $\bar{p}_1=\bar{e}_0\bar{e}_4-
\bar{e}_1\bar{e}_2$, $\bar{\psi}_1(\xi)=f_0\xi_1$ and
$\bar{p}_2=\bar{e}_0\bar{e}_5-\bar{e}_1\bar{e}_3$,
$\bar{\psi}_2(\xi)=f_0\xi_2$ form the ingredients for a set
of separating triples for $\bar{X}=\coAd(\bar{G})\bar{f}$
in $\bar{\Omega}$. As above it follows that
$\ker_{C^\ast(\bar{G})}\bar{\pi}$ is $L^1$-determined
for all representations $\bar{\pi}$ of $\bar{G}$ in
general position.\\\\
Clearly the quotients $\bar{\frakg}=\frakg/\langle e_4\rangle$
and $\bar{\frakg}=\frakg/\langle e_5\rangle$ can be treated
similarly: In these cases we get by on one separating triple.
Choose $\bar{p}=\bar{e}_0\bar{e}_5-\bar{e}_1\bar{e}_3$,
$\bar{\psi}(\xi_1\bar{e}_5+\xi_2\bar{e}_6)=
\xi_1(f_0-\frac{a}{b}\log|\xi_1|)$ and
$\bar{p}=\bar{e}_0\bar{e}_4-\bar{e}_1\bar{e}_2$,
$\bar{\psi}(\xi_1\bar{e}_4+\xi_2\bar{e}_6)=
\xi_1(f_0-a\log|\xi_1|)$ respectively. The next step is to
consider quotients $\bar{\frakg}=\frakg/\fraka$ for two-%
dimensional ideals $\fraka\subset\langle e_4,e_5,e_6\rangle$
which brings along nothing new. Finally we consider
$\bar{\frakg}=\frakg/\langle e_4,e_5,e_6\rangle$ which is
primitive $\ast$-regular by Lemma~5.4 of~\cite{Ung1} because
$\bar{\frakn}=[\bar{\frakg},\bar{\frakg}]$ is commutative in
this case. Altogether we have shown that the $8$-dimensional
exponential Lie group~$G$ defined above is primitive
$\ast$-regular. In fact, we have thoroughly verified
assertions~(1) and~(2) set up in the beginning of this
section, which form our strategy for proving primitive
$\ast$-regularity of exponential Lie groups.

\begin{rem}
The preceding example indicates the prospects of success of
the approach involving separating triples $(W,p,\psi)$. It
shows the necessity to deal with sets of these triples and
to localize to an appropriate, sparse subset~$\Omega$
of~$\frakm^\ast$. This allows us to consider polynomial
functions $p$ which are $\Ad(M)$-invariant on~$\Omega$,
but not on the entire space.
\end{rem}

Finally we descend to our second example, the exponential
Lie algebra $\frakg=\langle d_0,d_1,e_0,\ldots,e_6\rangle$
with commutator relations $[e_1,e_2]=e_3$, $[e_1,e_3]=e_4$,
$[e_0,e_1]=-e_1$, $[e_0,e_2]=2e_2$, $[e_0,e_3]=e_3$,
$[d_0,e_0]=-ae_6$, $[d_0,e_2]=e_2$, $[d_0,e_3]=e_3$,
$[d_0,e_4]=e_4$, $[d_1,e_0]=-be_5$, and $[d_1,e_5]=e_5$
where $a,b\in\mR$. Its nilradical $\frakn=\langle e_1,%
\ldots,e_6\rangle$ is a trivial extension of the 3-step
nilpotent filiform algebra and
\[\frakg\underset{3}{\supset}\frakn\underset{2}{\supset}
C^1\frakn+\frakz\frakn\underset{2}{\supset}C^1\frakn
\underset{1}{\supset}C^2\frakn\underset{1}{\supset}
\{0\}\]
is a descending series of characteristic ideals of~$\frakg$.
The algebraic structure of~$\frakg$ is characterized by
the fact that the nilpotent subalgebra $\fraks=\langle%
d_0,d_1,e_0,e_6\rangle$ acts semi-simply on~$\frakn$ with
weights $\alpha$, $\gamma$, $\gamma-\alpha$,
$\gamma-2\alpha$, $\delta$, $0$ where $\alpha(e_0)=-1$,
$\gamma(d_0)=1$, $\delta(d_1)=1$, and the other values
are zero.\\\\
Let $\frakm=\langle e_0,\ldots,e_6\rangle$ and
$f\in\frakm^\ast$ be in general position such that
$\frakm=\frakm_f+\frakn$. In particular $f_\nu\neq 0$
for $4\le\nu\le 6$, even $f_\nu=1$ without loss of
generality. It follows from
\begin{align*}
\coAd(\exp(ve_1))f\;(e_3)&=f_3-vf_4\\
\coAd(\exp(xe_3))f\;(e_1)&=f_1+xf_4\\
\end{align*}
that we can establish $f_1=f_3=0$. Since
$\frakm=\frakm_f+\frakn$, there is some
$X=te_0+ve_1+we_2+xe_3+Z\in\frakm_f$ with $t\neq 0$.
Now $0=f([X,e_3])=tf_1+vf_4$ and $0=f([X,e_2])=tf_2+vf_3$
implies $v=0$ and $f_2=0$.\\\\
In coordinates $\Phi(t,w,x,Z)=\exp(te_0)\,\exp(ve_1)\,%
\exp(we_2+xe_3+Z)$ we compute
\begin{align*}
\coAd(\exp(rd_0)\,\exp(sd_1)\,\Phi(t,v,w,x,Z))f\;(e_0)
&=f_0+ar+bs-vx,\\
(e_1)&=e^t\,x,\\
(e_2)&=\frac{1}{2}e^{-(r+2t)}\,v^2,\\
(e_3)&=-e^{-(r+t)}\,v,\\
(e_4)&=e^{-r},\\
(e_5)&=e^{-s}
\end{align*}
which shows that the $\frakn^\ast$-closure $\Omega$ of
$X=\coAd(G)f$ is contained in the subset of all
$h\in\Omega$ such that $h_6=1$, $h_5\ge 0$, $h_4\ge0$,
and $p_3(h)=2h_2h_4-h_3h_3=0$. In particular $h\in\Omega$
and $h_4=0$ implies $h_3=0$.\\\\
Next we compute the relevant unitary representations of $M$.
Put $f_{r,s}=\coAd(\exp(rd_0)\exp(sd_1))f$ and
$\pi_{r,s}=\mcK(f_{r,s})$. Clearly $\frakp=\langle e_0,%
e_2,\ldots,e_6\rangle$ is a Pukanszky-Vergne polarization at
$f_{r,s}$ for all $r,s\in\mR$, and $\frakc=\langle e_1\rangle$
is a coexponential subalgebra for $\frakp$ in $\frakm$. Define
$\dot{e}_0=-ie_0+\frac{1}{2}e_6$ and $\dot{e}_\nu=-ie_\nu\in
\frakm_{\mC}$ for $1\le\nu\le 6$. It turns out that
$\pi_{r,s}=\ind_P^M\chi_{f_{r,s}}$ is infinitesimally
given by
\begin{align*}
d\pi_{r,s}(\dot{e}_0)&=(f_0+ar+bs)+\xi D_\xi,\\
d\pi_{r,s}(\dot{e}_1)&=-D_{\xi},\\
d\pi_{r,s}(\dot{e}_2)&=\frac{1}{2}e^{-r}\xi^2,\\
d\pi_{r,s}(\dot{e}_3)&=-e^{-r}\xi,\\
d\pi_{r,s}(\dot{e}_4)&=e^{-r},\\
d\pi_{r,s}(\dot{e}_5)&=e^{-s}.
\end{align*}
If $g\in\frakm^\ast$ such that $g_4=g_3=0$ and ($g_1\neq 0$
or $g_2\neq 0$), then $\frakn$ is a Pukanszky-Vergne
polarization at $g\in\frakm^\ast$ with coexponential
subalgebra $\frakc=\langle e_0\rangle$. It follows that
the infinitesimal operators of $\rho=\ind_N^M\chi_g$
are given by
\begin{align*}
d\rho(\dot{e}_0)&=\frac{i}{2}-D_\xi,\\
d\rho(\dot{e}_1)&=e^{\xi}\,g_1,\\
d\rho(\dot{e}_2)&=e^{-2\xi}\,g_2,\\
d\rho(\dot{e}_\nu)&=g_\nu\text{ for }3\le\nu\le 6.\\
\end{align*}
The explicit formulas for $\coAd(\Phi(t,v,x,Z))f$ suggest to
define the polynomial $p_1=e_0e_0e_4-2e_0e_1e_3+2e_1e_1e_2$
which is $\Ad(M)$-invariant on $\Omega\subset\frakm^\ast$,
but not on the entire space. This definition yields
$p_1(f_{r,s})=e^{-r}(f_0+ar+bs)^2$. We observe
that the equations for $d\pi_{r,s}(\dot{e}_\nu)$
bear a striking resemblance to the formulas for
$\coAd(\Phi(t,v,w,x,Z))f_{r,s}\,$: simply replace
$e^{-t}v$ by $\xi$ and $e^tx$ by $-D_\xi$. If we
define
\[W_1=\dot{e}_0\dot{e}_0\dot{e}_4-2\dot{e}_0\dot{e}_3
\dot{e}_1+2\dot{e}_2\dot{e}_1\dot{e}_1-i\dot{e}_3
\dot{e}_1\in\mcU(\frakm_{\mC}),\]
then it follows from $(\xi D_\xi)^2=\xi^2D_\xi^2-i\xi D_\xi$
and the binomial identity for the commuting operators
$\xi D_\xi$ and $f_0+ar+bs+\xi D_\xi$ that
$d\pi_{r,s}(W_1)=p_1(f_{r,s})\mdot\Id$. Moreover it is
easy to see that $d\pi(W_1)=p_1(h)\mdot\Id$ for all
$h\in\Omega$ and $\pi=\mcK(h)$, i.e., $(W_1,p_1)$ is a
Duflo pair \wrt~$\Omega$. The same is true for
$p_2=e_0e_4-e_1e_3$ and $W_2=\dot{e}_0\dot{e}_4
-\dot{e}_3\dot{e}_1$ with $d\pi_{r,s}(W_2)=p_2(f_{r,s})=%
e^{-r}(f_0+ar+bs)$. In addition we consider the functions
$\psi_1(\xi)=\xi_1(\,f_0-a\log|\xi_1|-b\log|\xi_2|\,)^2$
and $\psi_2(\xi)=\xi_1(\,f_0-a\log|\xi_1|-b\log|\xi_2|\,)$
on~$\frakz\frakm^\ast$ where $\xi\leftrightarrow%
\xi_1e^\ast_4+\xi_2e^\ast_5+\xi_3e^\ast_6$.
Now we can prove

\begin{prop}\label{cFm_prop:fili_characterize_closure_of_X}
Assume that \boldmath$b=0$\unboldmath. In this case $G$ is
primitive $\ast$-regular. If $f\in\frakm^\ast$ in general
position satisfies the stabilizer condition
$\frakm=\frakm_f+\frakn$, then $\{\,(W_\nu,p_\nu,\psi_\nu):
\nu=1,2\,\}$ is a set of separating triples for $X=\coAd(G)f$
in its $\frakn^\ast$-closure $\Omega$.
\end{prop}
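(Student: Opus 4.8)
The plan is to establish the three conditions of Definition~\ref{cFm_defn:separating_triples} for the two triples $(W_\nu,p_\nu,\psi_\nu)$ and then to read off primitive $\ast$-regularity from Theorem~\ref{cFm_thm:separating_triples}. Condition~\textit{(i)} is already available: the computation preceding the proposition gives $d\pi(W_\nu)=p_\nu(h)\mdot\Id$ for all $h\in\Omega$, $\pi=\mcK(h)$, so each $(W_\nu,p_\nu)$ is a Duflo pair \wrt~$\Omega$. For condition~\textit{(ii)} I would note that, because $b=0$, the functions $\psi_1(\xi)=\xi_1(f_0-a\log|\xi_1|)^2$ and $\psi_2(\xi)=\xi_1(f_0-a\log|\xi_1|)$ depend on the single variable $\xi_1$ only and are $\mC$-linear combinations of the functions $\xi_1\log^s|\xi_1|$; these lie in the multiplier algebra~$\mcM$ by Proposition~\ref{cFm_prop:existence_of_k} and Lemma~\ref{cFm_lem:convolution_with_u_refined}, exactly as in the class discussed at the end of Section~\ref{sec:more_about_Fourier_multipliers}. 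This is where the hypothesis $b=0$ first intervenes decisively, since it removes the dependence of $\psi_\nu$ on the central coordinate $\xi_2=h_5$.

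The heart of the matter is condition~\textit{(iii)}. For the forward implication I would evaluate both sides on the orbit: substituting the coadjoint formulas one finds $p_1(h)=e^{-r}(f_0+ar)^2$ and $p_2(h)=e^{-r}(f_0+ar)$ along $\coAd(M)f_{r,s}$, while $\xi_1=h_4=e^{-r}$ produces the same values for $\psi_1,\psi_2$; hence $p_\nu=\psi_\nu$ on $X$, and by continuity on $\overline{X}$. For the reverse implication I must, given $h\in\Omega$ with $p_\nu(h)=\psi_\nu(h\,|\,\frakz\frakm)$, construct orbit points $f_n=\coAd(\exp(r_nd_0)\exp(s_nd_1)\Phi(t_n,v_n,w_n,x_n,Z_n))f$ with $f_n\to h$. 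The second use of $b=0$ is structural and pervasive: it decouples $h_5=e^{-s}$ from $h_0,\dots,h_4$, so that $s_n$ may always be chosen last and independently to achieve $e^{-s_n}\to h_5$, while $w_n,Z_n$ play no role for the chosen representative. I would then split on $h_4$. When $h_4>0$, set $r=-\log h_4$; the relation $p_3(h)=2h_2h_4-h_3^2=0$ valid on $\Omega$ lets me solve for $v$ (taking $t=0$, $x=h_1$), and $p_2(h)=\psi_2$ recovers $h_0=f_0+ar+h_1h_3/h_4$ exactly; here only $p_2$ is needed, because $p_1-p_2^2/h_4=h_1^2\,p_3/h_4$ vanishes on $\Omega$ while $\psi_1=\psi_2^2/h_4$, so $p_1=\psi_1$ is automatic.

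The main obstacle is the degenerate stratum $h_4=0$, where both triples are genuinely required. There $p_3=0$ forces $h_3=0$, and $p_1(h)=\psi_1=0$ forces $2h_1^2h_2=0$, i.e.\ $h_1=0$ or $h_2=0$ --- precisely the constraint that $\lim p_1=0$ imposes on any orbit limit with $r\to+\infty$. The difficulty is that sending $h_4=e^{-r_n}\to0$ forces $r_n\to+\infty$, so the term $ar_n$ in $h_0=f_0+ar_n-v_nx_n$ diverges and must be absorbed into $v_nx_n$ without spoiling the convergence of $h_1,h_2,h_3$. I would take $r_n=n$, $t_n=0$ throughout and treat three subcases: for $h_1\neq0$ (hence $h_2=0$) put $x_n=h_1$, $v_n=(f_0+an-h_0)/h_1$, so $v_n=O(n)$ and $e^{-n}v_n,\,e^{-n}v_n^2\to0$ give $h_3,h_2\to0$; for $h_1=0,\ h_2>0$ put $v_n=\sqrt{2h_2}\,e^{n/2}$, $x_n=(f_0+an-h_0)/v_n\to0$, yielding $h_2^n=h_2$ exactly and $h_3^n,h_1^n\to0$; for $h_1=h_2=0$ use the polynomial scaling $v_n=n^2$, $x_n=(f_0+an-h_0)/n^2\to0$, so that $e^{-n}n^4,\,e^{-n}n^2\to0$ kill $h_2,h_3$. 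In every subcase $v_nx_n=f_0+an-h_0$ forces $h_0^n=h_0$, and the choice of $s_n$ with $e^{-s_n}\to h_5$ completes $f_n\to h$, so $h\in\overline{X}$. Once the separating triples are in place, Theorem~\ref{cFm_thm:separating_triples} yields the $L^1$-kernel non-inclusion for all $\pi$ in general position with non-nilpotent stabilizer; the remaining cases --- $M=N$ via Propositions~2.6 and~2.8 of~\cite{Ung1}, and the successive proper quotients $\frakg/\langle e_\nu\rangle$ down to the base quotient with commutative commutator subalgebra covered by Lemma~5.4 of~\cite{Ung1} --- are dispatched exactly as for the first example, which proves $G$ primitive $\ast$-regular.
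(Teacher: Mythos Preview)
Your proof is correct and follows essentially the same route as the paper: verify conditions \textit{(i)}--\textit{(ii)} via the preceding computations and Section~\ref{sec:more_about_Fourier_multipliers}, then establish \textit{(iii)} by splitting on $h_4$ and, in the degenerate stratum $h_4=0$, using $p_1(h)=0$ to force $h_1h_2=0$ and constructing explicit sequences in three subcases. The only cosmetic differences are that for $h_4>0$ you build the orbit point directly rather than take an arbitrary sequence with $f_n'\to h'$ and deduce $f_{n0}\to h_0$, and in the subcase $h_1=h_2=0$ you use polynomial scaling $v_n=n^2$ where the paper uses $v_n=e^{r_n/4}$; both choices work for the same reason. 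Your remark that $p_1-\psi_1=(p_2^2-\psi_2^2)/h_4+h_1^2\,p_3/h_4$ on $\{h_4\neq 0\}$, making $p_1=\psi_1$ redundant there, is a nice clarification not made explicit in the paper.
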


\begin{proof}
Since $b=0$, the results of Section~\ref{sec:more_about_%
Fourier_multipliers} imply that $\psi_1$ and $\psi_2$ are
central Fourier multipliers. By definition $p_\nu(h)=
\psi_\nu(h\,|\,\frakz\frakm)$ for all $h\in X$, and hence
for all $h\in\overline{X}$. In order to prove the opposite
implication we suppose that $h\in\Omega$ such that
$p_\nu(h)=\psi_\nu(h\,|\,\frakz\frakm)$. Choose
sequences $r_n,\ldots,x_n$ such that $f'_n\to h'$ for
\[f_n=\coAd(\,\exp(r_nd_0)\exp(s_nd_1)%
\Phi(0,v_n,w_n,x_n,0)\,)f.\]
At first we assume that $h_4\neq 0$. Since $h\in\Omega$
and $p_2(h)=\psi_2(h\,|\,\frakz\frakm)$, it follows
$e^{-r_n}\to h_4\neq 0$ and
\begin{multline*}
e^{-r_n}f_{n0}=e^{-r_n}(f_0+ar_n-v_nx_n)
=\psi_2(f_n\,|\,\frakz\frakm)+f_{n1}f_{n3}\\
\to\psi_2(h\,|\,\frakz\frakm)+h_1h_3=h_0h_4
\end{multline*}
which implies $f_{n0}\to h_0$ and thus
$f_n\to h\in\overline{X}$. Next we assume $h_4=h_3=0$ so
that $r_n\to+\infty$. Note that $h\in\overline{X}$ implies
\[p_1(f_n)=e^{-r_n}(f_0+ar_n)\to 0=p_1(h)=2h_1^2h_2\]
which yields $h_1=0$ or $h_2=0$. Hence we must
distinguish three subcases. If $h_1\neq 0$ and
$h_2=0$, then we choose $r_n$, $s_n$ as above and
define $v_n=\frac{1}{h_1}(f_0+ar_n-h_0)$ and $x_n=h_1$.
By definition $f_n\to h\in\overline{X}$. The second
subcase is $h_1=0$ and $h_2\neq 0$. Since $h\in\Omega$,
it follows $h_2>0$. If we choose $r_n$, $s_n$ as above
and define $v_n=(2h_2)^{1/2}e^{r_n/2}$ and
$x_n=(2h_2)^{-1/2}e^{-r_n/2}(f_0-ar_n-h_0)$, then we
obtain $f_n\to h\in\overline{X}$. Finally we assume
$h_1=h_2=0$. Here we define $v_n=e^{r_n/4}$ and
$x_n=e^{-r_n/4}(f_0+ar_n-h_0)$. This proves
$f_n\to h\in\overline{X}$ in the third subcase. Altogether
we have shown that $h\in\overline{X}$ if and only if
$h\in\Omega$ and $p_\nu(h)=\psi_\nu(h\,|\,\frakz\frakm)$,
i.e., $\{(W_\nu,p_\nu,\psi_\nu):\nu=1,2\}$ is a set of
separating triples for~$X$ in~$\Omega$. From this and
the results of~\cite{Ung1} it follows that $\ker_{\CG}\pi$
is $L^1$-determined for all $\pi$ in general position. The
proper quotients of $G$ can be treated in analogy to the
proof of Lemma~\ref{cFm_lem:g52_characterize_closure_of_X}.
We omit the details because this would not yield anything
new.\qed
\end{proof}

The situation is more delicate if $b\neq 0$. In this
case the functions $\psi_1$ and  $\psi_2(\xi)=\xi_1%
(\,f_0-a\log|\xi_1|-b\log|\xi_2|\,)$ fail to be central
Fourier multipliers because of their singularity in~%
$\xi_2=0$. Thus we put $\tilde{\psi}_\nu(\xi)=%
\psi_\nu(\xi)\xi_2$, $\tilde{p}_\nu=p_\nu e_5$,
and $\tilde{W}_\nu=W_\nu e_5$. Note that
$(\tilde{W}_\nu,\tilde{p}_\nu)$ is a Duflo
pair \wrt~$\Omega$. Furthermore we define the
admissible part $\Omega_0=\{h\in\Omega:h_5\neq 0\}$
of $\Omega$. All we can prove is

\begin{lem}
Assume that \boldmath$b\neq 0$\unboldmath. If
$f\in\frakm^\ast$ is in general position such
that $\frakm=\frakm_f+\frakn$, then
$\{\,(\tilde{W}_\nu,\tilde{p}_\nu,
\tilde{\psi}_\nu):\nu=1,2\,\}$ is a set of
separating triples for the orbit $X=\coAd(G)f$
in the admissible part $\Omega_0$ of its
$\frakn^\ast$-closure $\Omega$. The non-admissible
part of the closure of $X$ is characterized as follows:
$h\in\overline{X}\setminus\Omega_0$ if and only if
$h\in\Omega$ and $h_5=h_4=h_3=0$.
\end{lem}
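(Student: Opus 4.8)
The plan is to verify the three conditions of Definition~\ref{cFm_defn:separating_triples} on the admissible part $\Omega_0$ and then to pin down the non-admissible stratum $\overline{X}\setminus\Omega_0$ by hand. Conditions \textit{(i)} and \textit{(ii)} are the cheap ones. That $(\tilde{W}_\nu,\tilde{p}_\nu)$ is a Duflo pair \wrt~$\Omega$ has already been recorded: since $e_5\in\frakz\frakm$ is central, $d\pi(e_5)$ is the scalar $h_5$, so multiplying the Duflo pair $(W_\nu,p_\nu)$ by $e_5$ merely inserts the factor $h_5$ on both sides of~(\ref{cFm_equ:Duflo_pair}). For \textit{(ii)} I would expand $\tilde{\psi}_\nu(\xi)=\psi_\nu(\xi)\,\xi_2$ into a linear combination of products of the polynomials $\xi_1,\xi_2$ and the elementary functions $\xi_1\mapsto\xi_1\log^s|\xi_1|$ and $\xi_2\mapsto\xi_2\log^{s'}|\xi_2|$. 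The whole point of the extra factor $\xi_2$ is that it turns the unbounded term $\xi_1\log|\xi_2|$ of $\psi_\nu$ into $\xi_1\,(\xi_2\log|\xi_2|)$, and $\xi_2\log|\xi_2|$ extends continuously by $0$ across $\xi_2=0$. Each elementary factor lies in $\mcM$ by Proposition~\ref{cFm_prop:existence_of_k} and Lemma~\ref{cFm_lem:convolution_with_u_refined}, exactly as at the end of Section~\ref{sec:more_about_Fourier_multipliers}; since $\mcM$ is an algebra containing the polynomials, it follows that $\tilde{\psi}_\nu\in\mcM$.

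For condition \textit{(iii)} I would work on $\Omega_0$, where $h_5>0$. There $\tilde{p}_\nu(h)=h_5\,p_\nu(h)$ and $\tilde{\psi}_\nu(h\,|\,\frakz\frakm)=h_5\,\psi_\nu(h\,|\,\frakz\frakm)$, so the equation $\tilde{p}_\nu(h)=\tilde{\psi}_\nu(h\,|\,\frakz\frakm)$ is equivalent to $p_\nu(h)=\psi_\nu(h\,|\,\frakz\frakm)$. Any sequence $f_n\in X$ with $f_n\to h$ satisfies $f_n(e_5)=e^{-s_n}\to h_5>0$, hence $s_n\to-\log h_5$ is bounded and $bs_n$ converges; the singular term $-b\log|\xi_2|$ of $\psi_\nu$ therefore stays bounded along the approximation. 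Absorbing its limit into $\hat{f}_0=f_0-b\log h_5$, the coadjoint formulas, the polynomials $p_1,p_2$ and the multipliers $\psi_1,\psi_2$ take on each level set $\{h_5=c>0\}$ exactly the shape of the case $b=0$, with $f_0$ replaced by $\hat{f}_0$. Hence the closure computation of Proposition~\ref{cFm_prop:fili_characterize_closure_of_X} applies and gives, for $h\in\Omega_0$, that $h\in\overline{X}$ if and only if $p_\nu(h)=\psi_\nu(h\,|\,\frakz\frakm)$ for $\nu=1,2$. This shows that $\{(\tilde{W}_\nu,\tilde{p}_\nu,\tilde{\psi}_\nu):\nu=1,2\}$ is a set of separating triples for $X$ in $\Omega_0$.

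It remains to identify $\overline{X}\setminus\Omega_0=\overline{X}\cap\{h_5=0\}$, and here the hypothesis $b\neq0$ does the decisive work. Let $f_n\in X$ with $f_n\to h$ and $h_5=0$, so that $s_n\to+\infty$. A direct computation (the cross terms cancel) gives $p_2(f_n)=f_n(e_4)\,(f_0+ar_n+bs_n)$, and by continuity $p_2(f_n)\to p_2(h)$ is finite. If $h_4>0$ then $f_n(e_4)\to h_4\neq0$ while $r_n=-\log f_n(e_4)$ stays bounded, so $f_0+ar_n+bs_n\to\pm\infty$ because $b\neq0$, forcing $p_2(f_n)\to\pm\infty$ --- a contradiction. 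Hence $h_4=0$, and then $p_3(h)=2h_2h_4-h_3^2=0$ (valid on $\Omega\supset\overline{X}$) yields $h_3=0$. Conversely, for $h\in\Omega$ with $h_5=h_4=h_3=0$ I would construct $f_n\to h$ by the same case analysis on $(h_1,h_2)$ as in Proposition~\ref{cFm_prop:fili_characterize_closure_of_X}, now driving both $r_n\to+\infty$ and $s_n\to+\infty$. The genuine obstacle of the $b\neq0$ case is precisely to keep $s_n\to+\infty$ while still matching $h_0$ through the term $-v_nx_n$ in $f_n(e_0)$: for $h_1\neq0$ this is arranged by choosing the sign of $v_n$ so that the exponentially large correction drives $s_n$ to $+\infty$, and for $h_1=0$ by taking $r_n=s_n=n$ and letting $v_nx_n$ absorb the linear growth $(a+b)n$. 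Together with the forcing $h_5=0\Rightarrow h_4=0$ above --- the real conceptual heart of the $b\neq0$ case --- this gives the asserted description of $\overline{X}\setminus\Omega_0$.
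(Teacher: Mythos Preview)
Your argument is correct and tracks the paper's proof closely: the paper also reduces condition \textit{(iii)} on $\Omega_0$ to the $b=0$ case via convergence of $s_n$, and for the non-admissible stratum it derives the same contradiction from $h_4\neq 0$ (using $f_{n0}\to h_0$ directly rather than your equivalent reformulation through $p_2(f_n)=e^{-r_n}(f_0+ar_n+bs_n)$), followed by the same four-case construction of approximating sequences. Your description of the converse construction slightly blurs the subcases (the paper takes $r_n=s_n=n$ in three of the four cases and reserves the sign trick with $s_n$ solved for only in the case $h_1\neq 0,\;h_2\neq 0$), but your scheme works as well.
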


\begin{proof}
Clearly $\tilde{\psi}_1$ and $\tilde{\psi}_2$ are
central Fourier multipliers and $\tilde{p}_\nu(h)=
\tilde{\psi}_\nu(h\,|\,\frakz\frakm)$ for all
$h\in\overline{X}$. As in the proof of Proposition~%
\ref{cFm_prop:fili_characterize_closure_of_X} one can
show that $h\in\Omega_0$ and $\tilde{p}_\nu(h)=
\tilde{\psi}_\nu(h\,|\,\frakz\frakm)$ for
$\nu\in\{1,2\}$ implies $h\in\overline{X}$. Here
one heavily uses the fact that $h\in\Omega_0$ and
$f'_n\to h'$ implies the convergence of $s_n$ because
$e^{-s_n}\to h_5\neq 0$. Consequently the
$(\tilde{W}_\nu,\tilde{p}_\nu,\tilde{\psi}_\nu)$
are separating triples for $X$ in $\Omega_0$.\\\\
Next we verify the characterization of the non-admissible
part of the closure of $X$. Assume $h\in\overline{X}\setminus%
\Omega_0$. Then $h\in\Omega$, $h_5=0$, and $s_n\to+\infty$.
If $h_4$ were non-zero, then the sequences $r_n$, $v_n$,
$x_n$ would converge in contradiction to
$f_{n0}=f_0+ar_n+bs_n-v_nx_n\to h_0$. Thus $h_4=h_3=0$.\\\\
For the opposite implication we assume $h_5=h_4=h_3=0$.
If $h_1=h_2=0$, then we choose $r_n=s_n=n$, $v_n=e^{r_n/4}$,
and $x_n=e^{-r_n/4}(f_0+ar_n+bs_n-h_0)$. If $h_1\neq 0$
and $h_2=0$, then we put $r_n=s_n=n$,
$v_n=\frac{1}{h_1}(f_0+ar_n+bs_n-h_0)$, and $x_n=h_1$.
If $h_1=0$ and $h_2\neq 0$, then we define $r_n=s_n=n$,
$v_n=(2e^{r_n}h_2)^{1/2}$, and $x_n=(2e^{r_n}h_2)^{-1/2}%
(f_0+ar_n+bs_n-h_0)$. The last case is $h_1\neq 0$ and
$h_2\neq 0$. Here we choose $r_n=n$, $v_n=\sgn(bh_1)
(2e^{r_n}h_2)^{1/2}$, $x_n=h_1$, and
$s_n=\frac{1}{b}(h_0-f_0-ar_n+v_nx_n)$ so that
$s_n\to+\infty$. In any case it follows $f_n\to h\in
\overline{X}$.\qed
\end{proof}

At this stage it remains open whether $\bigcap_{r,s\in\mR}
\ker_{\LM}\pi_{r,s}\not\subset\ker_{\LM}\rho$ holds for
non-admissible, critical $g$ and $\rho=\mcK(g)$. Note that
in this particular case $g\in\Omega\setminus(\overline{X}%
\cup\Omega_0)$ if and only if $g\in\Omega$, $g_4\neq 0$,
and $g_5=0$. Although one might expect this 9-dimensional
exponential Lie group $G$ to be primitive $\ast$-regular
for $b\neq 0$, the results of the preceding sections are
too coarse to prove this. The preceding examples (and
similar ones) put the scope of the method of separating
triples into perspective.\\\\
What many exponential Lie algebras $\frakg$ of dimension
$\le 7$ have in common is that they contain ideals
$[\frakg,\frakg]\supset\frakb\underset{1}{\supset}
\fraka\underset{1}{\supset}\frakz\frakb$ where $\frakb$
is a 3-dimensional Heisenberg algebra, $\fraka$ is
commutative, and $\frakz\frakb$ is the one-dimensional
center of $\frakb$. In particular $\frakb\subset\frakn$
and $\frakz\frakb\subset\frakz\frakm$. Here we distinguish
the central case $\frakz\frakb\subset\frakz\frakg$ and
the non-central case $\frakz\frakb\not\subset\frakz\frakg$.
Roughly spoken, at least in low dimensions, the situation
is as follows:

\begin{rem}
Let $\frakg$ be an exponential Lie algebra, $\frakn$
a coabelian, nilpotent ideal of $\frakg$, and
$f\in\frakg^\ast$ in general position such that its
stabilizer $\frakm=\frakg_f+\frakn$ is not nilpotent.
In this situation it is advisable to look for Duflo
pairs~$(W_\nu,p_\nu)$ on~$M$. The existence
of~$(W_\nu,p_\nu)$ is (more or less) an intrinsic
property of~$M$. If $\frakg=\fraks\ltimes\frakn$ is a
semi-direct sum of a commutative subalgebra $\fraks$
and the ideal $\frakn$, then the existence
of~$(W_\nu,p_\nu)$ suffices to prove that $G$ is
primitive $\ast$-regular. If $\frakg=\mR d\ltimes\frakm$,
i.e., in case of a one-parameter subgroup $\Ad(\exp(rd))$
acting on the stabilizer $\frakm$, the (finer) method of
separating triples applies and yields the primitive
$\ast$-regularity of $G$. But this approach may fail
as soon as $\dim\frakg/\frakm\ge 2$.
\end{rem}

Using some of the results combined in this article,
the author proved in his thesis that all exponential
Lie algebras up to dimension seven are primtive
$\ast$-regular. This severe restriction on the
dimension of $\frakg$ implies that either $\frakg=%
\fraks\ltimes\frakn$ or $\dim\frakg/\frakm=1$. It
should be well noted that no counter-example
seems to be known so far.
\begin{ack}
The author would like to thank Prof.~Dr.~D.~Pogunkte for
his support and Prof.\ Dr.\ D.\ M\"uller for helpful
remarks.
\end{ack}

\ifthenelse{\boolean{appendix}}{\newpage
\section{The non-central case}
The postulates  enforce that $r$ is the only among the
first $l+1$ variables (the arguments of $Q$) which may
tend to infinity. Profiting by the existence of these
polynomials $p_0$ and $p_1$, we now obtain the following
description of (the admissible part of) the closure of
the orbit $\coAd(G)f$.
$$f_{r,s}=\coAd\left(\,E(r,s)\,\right)f\;.$$
The modification (multiplication by $C_\nu$) of the elements
$W_0$ and $W_1$ of $\mcU(\frakm_\mC)$ is absolutely
necessary in order to avoid singularities of $\psi$.
Such singularities make the application of Theorem X
impossible, as we have already noticed.
There is no doubt about the significance of these
assumptions for our treatise. Concerning the orbit
space of the coadjoint action, these assumptions are
indispensable for a concrete characterization of the
closure of the orbit $\coAd(G)f$ in $\frakm^\ast$.
From the representation theoretical point of view,
these postulates guarantee that $(W,p,\psi)$ separates
$\rho$ from $\{\pi_{r,s}:(r,s)\in\mR^{m+1}\}$. 
Though including semi-direct sums $\frakg=\fraks\ltimes\frakn$,
the condition $[\fraks_c,\frakt]=0$ does not reach far beyond
the case $\dim\frakg/\frakm=1$, i.e., a one-parameter group
$\Ad(\exp rd_0)$ acting on the stabilizer $\frakm$,
non-trivially on the central ideal $C^k\frakn$.
If $\dim\frakg/\frakm>1$ and $g\in\frakm^\ast$ is critical
for $\coAd(G)f$, but not admissible with respect to $f$
and $\Gamma$, then we must admit that the situation remains
somewhat mysterious.\\\\}{}

\bibliography{/home/oungerma/publications/literature}{}
\bibliographystyle{elsart-num-sort}

\end{document}